\newtheorem{thm}{Theorem}
\newtheorem{cor}{Corollary}
\newtheorem{rem}{Remark}
\newtheorem{ex}{Example}
\newtheorem{lem}{Lemma}
\newtheorem{df}{Definition}
\def\a{\vec{a}}
\def\x{\tilde{x}}
\def\y{\overline{y}}
\def\C{\mathbb{C}}
\def\R{\mathbb{R}}
\def\Z{\mathbb{Z}}
\def\Q{\mathbb{Q}}
\def\O{\mathbb{O}}
\def\OO{\tilde{\Omega}}
\def\RR{\mathcal{R}}
\def\T{\mathbb{T}}
\def\N{\mathcal{N}}
\def\M{\mathcal{M}}
\def\1{{\rm(i)}}
\def\1{{\rm(i)}}
\def\1{{\rm(i)}}
\def\1{{\rm(i)}}
\def\2{{\rm(ii)}}
\def\3{{\rm(iii)}}
\def\4{{\rm(iv)}}
\def\5{{\rm(v)}}
\def\n{\mathbf{n}}
\title{Dynamical systems defined by polynomials with algebraic properties
\author{
Shigeki Akiyama\thanks{Institute of Mathematics, University of Tsukuba 
(akiyama@math.tsukuba.ac.jp)}~,
Xiang Gao\thanks{Faculty of mathematics and statistics, 
Hubei University, Wuhan, Hubei, 430062 China (gaojiaou@gmail.com)}~
and
Teturo Kamae\thanks{Advanced Mathematical
Institute, Osaka Metropolitan University, 558-8585 Japan (kamae@apost.plala.or.jp)}}}
\date{}
\begin{document}
\maketitle
\vspace{-1em}
\begin{abstract}
Let $St(G)$ be the set of streams over an additive group $G$, that is, the set of two side sequences from $\prod_{-\infty}^\infty G$. We are mainly interested in the case that $G$ is the torus $\T=\R/\Z$, identifying $X:=(x_n:n\in\Z)\in St(\T)$ with the formal power series $X(z)=\sum_{n\in\Z}x_nz^n$. 
Given a primitive polynomial $P(z)\in\Z[z]$. 
We shall study algebraic and dynamical properties of the space of stream zeros 
$\Omega_P\subset St(\T)$, which is the kernel of the action $P(z)$ on $St(\T)$, 
where the action is the multiplication as formal power series over $z$, i.e.  
$P(z)\times X(z)$. 
Together with the shift $\sigma$ defined by $(\sigma X)_n=x_{n+1}~(\forall n\in\Z)$, 
$\Omega_P$ becomes a compact dynamical system $(\Omega_P,\sigma)$ whose symbolic representation is given by the integer part of the image of the action. We proved that there is a one-to-one correspondence between the factorization of $P(z)$ and the semi-direct sum decomposition of the additive group $\Omega_P$. 
Furthermore, the group of strong automorphisms of $\Omega_P$ for a 
primitive polynomial $P(z)\in\Z[z]$ of degree $k$ having a root $\theta_1$ and  
$a_k$ as the coefficient of highest degree is investigated. 
It is proved to be isomorphic to the unit group in $\Z[a_k\theta_1]$, which is 
known to be isomorphic to the direct product of $k_1+k_2-1$ number of cyclic groups 
of infinite order with a finite cyclic group. 
Specially in the case of $\deg P=2$, it is obtained using the continued fraction 
expansion of $\theta_1$. 
Finally, we study the streams over the Galois field $GF(q)$. 
In this case, the group of strong automorphisms of $\Omega_P^q$ is a cyclic 
group of order $q^k-1$, where $k$ is the degree of $P(z)$. 
\end{abstract}

\section{Introduction}
Let $X=(x_n\in G:n\in\Z)$ be a two-side sequence of elements in an additive group $G$,
which is called a {\it two side stream} (or simply, {\it stream}) over $G$.
The set of two side streams over $G$ is denoted by $St(G)$.
This $X$ is written as $(\cdots,x_{-2},x_{-1};x_0,x_1,\cdots)$, here the semicolon “;” 
separates the coordinates $x_n~(n<0)$ from $x_n~(n\ge 0)$. It is 
considered also as a two-side formal power series in a variable $z$, namely,
$$X=\cdots+x_{-2}z^{-2}+x_{-1}z^{-1}+x_0+x_1z+x_2z^2+\cdots.$$
Throughout this paper, we use the variable $z$ in this sense, 

Usually, streams imply one-side sequences and are being studied in the computer 
science intensively (J.J.M.M. Rutten \cite{R}, for example), where the suffix $n$ 
corresponds to the order of processes. Here we study two-side sequences in another 
sense borrowing some terminology. 

Let $St(G)$ considered as an additive group by component-wise addition. Moreover, 
if $G$ is a topological group, then $St(G)$ is considered as a topological group with the 
product topology coming from $G$. Let $0$ be the additive unit of $G$, 
then the additive unit of $St(G)$ is $(0^\infty;0^\infty)$, where $0^\infty$ 
before ``;'' implies $\cdots,0,0$ while that after ``;'' implies $0,0,\cdots$.

Let $G$ be a topological commutative ring having the multiplication unit 1. 
The product of $(x_n:n\in\Z)$ and $(y_n:n\in\Z)\in St(G)$ as the formal power 
series is defined if it exists, and called {\it convolution product} as well, that is  
\begin{equation}
(x_n:n\in\Z)\times(y_n:n\in\Z)=\left(\sum_{i=-\infty}^\infty x_iy_{n-i}:n\in\Z\right).\label{(1)}
\end{equation}
The unit of $St (G)$ with respect to the convolution product is $I=(0^\infty;1,0^\infty)$.
We call $(x_n:n\in\Z)\in St(G)$ {\it absolutely summable} if the series $\sum_{n\in\Z}x_n$ converges to the same value independent of the order of the sum.

For $(x_n:n\in\Z)\in St(G)$, even if $(y_n:n\in\Z)\in St(G)$ such that
$$(x_n:n\in\Z)\times(y_n:n\in\Z)=I$$
exists, it may not be unique. For example, 
$$(0^\infty;1,-1,0^\infty)\times(0^\infty;1^\infty)
=(0^\infty;1,-1,0^\infty)\times((-1)^\infty;0^\infty)=I.$$

An element $(a_n:n\in\Z)$ of $St(\C)$ such that $a_n=0$ except for finitely many $n$ 
can be identified with a polynomial $P(z)=\sum_{n\in S}a_nz^n$ admitting negative powers, where $S\subset \Z$ is the finite set of $n$ such that $a_n\ne0$. 

\begin{df}{\rm
We call a polynomial $P(z)=\sum_{n\in S}a_nz^n\in\Z[z]$ {\it primitive} 
if there exists a positive integer $k$ such that $S=\{0,1,\cdots,k\}$ with  $a_0\ne0,a_k\ne0$ and 
the greatest common divisor (GCD) of the coefficients $a_0,a_1,\cdots,a_k$ is $\pm1$. 
We call $P(z)$ {\it hyperbolic} if $P(z)=0$ does not hold for any $z\in\C$ with $|z|=1$.
}\end{df}

Let $\T=\R/\Z$ be the torus, which is a topological additive group with $\Z$-action.
Hence, $St(\T)$ is considered as a topological additive group with $\Z$-action, 
where the $\Z$-action is coordinate-wise.  
Let $P(z)\in\Z[z]$ be a primitive polynomial as in Definition 1. 
Then, $P(z)$ acts on $St(\T)$ as the multiplication of formal power series or 
the convolution product of streams as in (\ref{(1)}). That is, 
\begin{multline*}
(P(z),(x_n:n\in\Z))\mapsto(a_n:n\in S)\times(x_n:n\in\Z)\\
=(a_kx_{n-k}+\cdots+a_1x_{n-1}+a_0x_n:n\in\Z)\in St(\T).
\end{multline*}
We study the kernel of this action, denoted by $\Omega_P$ and called the {\it stream zeros} of $P(z)$, that is
$$\Omega_P:=\left\{(x_n:n\in\Z)\in St(\T):P(z)\times(x_n:n\in\Z)=(0^\infty;0^\infty)\right\}.$$ 
We show that $\Omega_P$ has a decomposition into semi-direct sums  
corresponding to the factor decomposition of $P(z)$. 
The strong automorphism group of $\Omega_P$ is also studied and the 
structure is determined using Dirichlet's unit theorem. 

The paper is organized as follows. 
In Section 2 we show some basic properties
of the convolution product which is useful for further sections. In Section 3
we discuss $\Omega_P$ from dynamical system viewpoint. In Section 4 we introduce
the notion of lift of a steam zeros on $\T$, that is, a stream zeros on $\R$ whose projection 
becomes it and prove Theorem 3 which gives a way to get a lift if Condition (\#) is 
satisfied.  Moreover, we prove that any primitive $P(z)$ satisfies Condition (\#), which is of independent interest.
When the lifting matrix of size $l=1$, this was answered by M. Newman \cite{N} and 
X. Zhan \cite{Z}.

In Section 5 we use the tools from resultant of two polynomials, generalized
Vandermonde's determinant, and Hadamard product to establish one-to-one
correspondence between the factorization of $P(z)$ and the semi-direct sum 
decomposition of the additive group $\Omega_P$.
In Section 6, we turn to investigate the algebraic structure of $\Omega_P$, we prove
that the group of strong automorphisms of $\Omega_P$ commuting with the shift can 
be represented by the matrices from $Saut_P$ (Definition 4). 
Our Theorem 7 shows that there are close relation between the structure of $Saut_P$ and the unit group of the integer ring in the field generated by an algebraic root of 
$P(z)$, this have also been investigated by A. Katok, S. Katok and K. Schmdit \cite{KKS}. 
For some other viewpoints (such as symbolic complexity) studying the automorphism group of more general shifts, refer  V. Cyr and B. Kra. \cite{CK}. 

In Section 7, 
we discuss the case $\deg P(z)=2$ and show that $Saut_P$ is determined by the
discriminant of $P(z)$, while when the degree is larger than 2, we obtain only partial
results, which also indicates that it has close relationship with the theory of
Diophantine equations, known as Pell's equation shown in the former case. 
We provide some examples to illustrate all theses theorem. 
In the last section, we focus on the zeros of $P(z)$ action on $St(GF(q))$, 
where $GF(q)$ is the Galois field of size $q$. We prove that 
if $P(z)$ is irreducible, then $Saut_P$ is a cyclic group of order $q^k-1$. 
 
\section{Convolution product and its inverse}
In this section, we present some basic properties of the
convolution product of $St(\C)$. As one can see from the previous discussion, the convolution inverse may not be unique, we thus wish to find some conditions that can guarantee the uniqueness of the convolution inverse. The following theorem is known 
(S. Akiyama, T. Kamae and H. Kaneko \cite{AKK}), here we give another
elementary proof.
\begin{thm}
Let $U=(u_n:n\in\Z),~V=(v_n:n\in\Z),~W=(w_n:n\in\Z)$ be arbitrary 
elements in $St(\C)$.\\
\1~We have $U\times V=V\times U$ if either $U\times V$ or $V\times U$ exists. 
If among these three streams $U,V$ and $W$, two of them are absolutely summable and the remaining one is bounded, then the convolution products below exist and it holds that
$$(U\times V)\times W=U\times(V\times W).$$
\2~If $U$ is absolutely summable, then an absolutely summable $V$ satisfying 
$U\times V=I$ or $V\times U=I$ is unique if exists. In this case, we have 
$U\times V=V\times U=I$ and call $V$ {\it convolution inverse} of 
$U$ and is denoted by $U^{-1}$.\\
\3~If both $U$ and $V$ are absolutely summable, then so is $U\times V$. 
Furthermore, if both of $U^{-1},V^{-1}$ exist, then
$(U\times V)^{-1}$ exists and $(U\times V)^{-1}=V^{-1}\times U^{-1}$
holds. Moreover, $(U^{-1})^{-1}=U$ holds. \\
\4~Any primitive polynomial $P(z)=(a_n:n\in\Z)$ 
has a unique absolutely summable inverse $P(z)^{-1}$ in $St(\R)$ if it is 
hyperbolic. 
\end{thm}

\begin{proof} (i) The first statement is clear. For the second statement, 
let $U,V,W$ satisfy the required condition. 
Then for any $n\in\Z$, we prove that
$$
\sum_{i,j,k\in\Z\atop{i+j+k=n}}|u_iv_jw_k|<\infty.
$$
By symmetry, we may assume that $U,V$ are absolutely summable and $W$
is bounded by $L$, say. Then, we have 
$$
\sum_{i,j,k\in\Z\atop{i+j+k=n}}|u_iv_jw_k|<L\sum_{i,j\in\Z}|u_iv_j|
=L\sum_{i\in\Z}|u_i|\sum_{j\in\Z}|v_j|<\infty.
$$
Thus, $\sum_{i,j,k\in\Z\atop{i+j+k=n}}u_iv_jw_k$ converges absolutely.
Hence, we have
$$\sum_{k\in\Z}\left(\sum_{i,j\in\Z\atop{i+j=n-k}}u_iv_j\right)w_k
=\sum_{i\in\Z}u_i\left(\sum_{j,k\in\Z\atop{j+k=n-i}}v_jw_k\right)
=\sum_{i,j,k\in\Z\atop{i+j+k=n}}u_iv_jw_k
$$
and $(U\times V)\times W=U\times(V\times W)$.

(ii) We prove the uniqueness since the other statements follow from the 
first statement in \1. If $U\times V=U\times V'=I$, then by \1, we have 
$$V'=(U\times V)\times V'=(V\times U)\times V'=V\times (U\times V')=V.$$

(iii) is clear.

(iv) Let $P(z)=\sum_{n\in S}a_nz^n$ with a finite nonempty set $S=\{n\in\Z:a_n\ne0\}$.
Consider the factorization of $P(z)$ and applying (ii), it is sufficient to prove that
$P(z)$ being of the form, either constant $C\ne0$, $z,z^{-1}$ or $z-\omega$ with
$0<|\omega|\ne1$, has the absolutely summable inverse. \\
Case 1:$P(z)=C$. Since $C=(0^\infty;C,0^\infty)$,
$C^{-1}=(0^\infty;C^{-1},0^\infty)$ becomes the absolutely summable inverse. \\
Case 2:$P(z)=z~\mbox {or}~ z^{-1}$. One easily verifies that $z$ and $z^{-1}$ are
absolutely summable and inverses of each other. That is,
$(0^\infty1;0^\infty)=(0^\infty;0,1,0^\infty)^{-1}$.\\
Case 3:$P(z)=z-w$ with $|\omega|>1.$
Since
$$
(z-\omega)^{-1}=(-1)\omega^{-1}(1-\omega^{-1}z)^{-1}
=-\omega^{-1}-\omega^{-2}z-\omega^{-3}z^2-\cdots,
$$
so $(0^\infty;-\omega^{-1},-\omega^{-2},-\omega^{-3},\cdots)$ becomes the
absolutely summable inverse of $z-\omega$.\\
Case 4:$P(z)=z-w$ with $|\omega|<1$.
Since
$$
(z-\omega)^{-1}=z^{-1}(1-\omega z^{-1})^{-1}=z^{-1}+\omega z^{-2}
+\omega^2 z^{-3}+\cdots,
$$
then $(\cdots,\omega^2,\omega,1;0^\infty)$ becomes the absolutely summable inverse of $z-\omega$.

If $\omega\notin\R$ in Case 2 (or Case 3), then the conjugate $\overline{\omega}$ 
is also in Case 2 (or Case 3, respectively) and 
$(z-\omega)^{-1}(z-\overline{\omega})^{-1}\in St(\R)$. 
Thus, $P(z)$ has the absolutely summable inverse $P(z)^{-1}\in St(\R)$ as the product
of the above terms. 
\end{proof}

\section{Stream zeros and dynamical systems}

Let $P(z)$ be a primitive polynomial over $\Z$. Then clearly $\Omega_P$ is a compact
$\Z$-subgroup of $St(\T)$. Moreover, the pair $(\Omega_P,\sigma)$, where $\sigma$ is the shift, becomes a compact dynamical system, which is called the dynamical system introduced by $P(z)$.
On the other hand, let 
$$\{\Omega_P\}=\{(\{x_n\}:n\in\Z)\in St([0,1)):(x_n:n\in\Z)\in\Omega_P\},$$
where $\{~\}$ implies the fractional part and $\{x_n\}$ is the representative of 
$x_n\in\T$ in $[0,1)$. Then,
$P(z)\times X\in K_P^\Z$ holds for any $X\in\{\Omega_P\}$,
where $K_P$ is a finite subset of integers with
$$K_P=\{k_*+1,k_*+2,\cdots,k^*-1\},~\mbox{and}$$
\begin{equation}\label{(3)}
k_*:=\sum_{n\in S,a_n<0}a_n\mbox{ and }k^*:=\sum_{n\in S,a_n>0}a_n.
\end{equation}
Let 
\begin{equation}\label{(3.5)}
\Xi_P=\{P(z)\times\{\x_n\}:\{\x_n\}\in\{\Omega_P\}\}.
\end{equation}
Then $\Xi_P$ is
a compact, shift-invariant subset of $K_P^\Z$. Together with the shift $\sigma$,
we call $(\Xi_P,\sigma)$ the {\it symbolic representation} of $(\Omega_P,\sigma)$

\begin{thm}
Let a primitive polynomial $P(z)=\sum_{n\in S}a_nz^n$ be hyperbolic.
Then,  the dynamical systems $(\Omega_P,\sigma)$ and $(\Xi_P,\sigma)$ are conjugate.
Hence, we have the following commutative diagram. The conjugate map
$\times P(z)$ is almost continuous, that is, continuous except for a countable
set. 

Moreover, a necessary and sufficient condition for
$\xi=(\xi_n:n\in\Z)\in St(\Z)$ to be $\xi\in\Xi_P$ is
that $P(z)^{-1}\times\xi\in St([0,1))$, which automatically implies that
$P(z)^{-1}\times\xi\in\Omega_P$.

\begin{figure}[h]
\setlength{\unitlength}{0.4mm}
\begin{picture}(300,60)(-100,25)
\put(24,85){\vector(1,0){37}}
\put(2,73){\vector(0,-1){37}}
\put(-2,36){\vector(0,1){37}}
\put(85,73){\vector(0,-1){37}}
\put(89,36){\vector(0,1){37}}

\put(24,26){\vector(1,0){37}}

\put(-8,80){$\{\Omega_P\}$}
\put(78,80){$\{\Omega_P\}$}
\put(-4,22){$\Xi_P$}
\put(83,22){$\Xi_P$}

\put(37,75){$\sigma$}
\put(40,29){$\sigma$}
\put(-45,53){$\times P(z)^{-1}$}
\put(92,53){$\times P(z)^{-1}$}
\put(5,53){$\times P(z)$}
\put(54,53){$\times P(z)$}
\end{picture}
\end{figure}
\end{thm}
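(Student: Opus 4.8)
The plan is to verify that the two squares of the diagram commute and then to show that the vertical maps $\times P(z)$ and $\times P(z)^{-1}$ invert one another, so that the conjugacy and the characterization of $\Xi_P$ come out together. Since $P(z)$ is hyperbolic, Theorem~1(iv) provides an absolutely summable inverse $P(z)^{-1}$, while $P(z)$ itself is absolutely summable (finite support). The shift $\sigma$ is convolution by $z=(0^\infty;0,1,0^\infty)$, so commutativity of each square is the identity $P(z)^{\pm1}\times(z\times\,\cdot\,)=z\times(P(z)^{\pm1}\times\,\cdot\,)$; this is an instance of the associativity in Theorem~1(i), valid because two of the three factors ($P(z)^{\pm1}$ and $z$) are absolutely summable and the remaining argument is bounded.

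Next I would establish the algebraic inverse relations. For $y\in\{\Omega_P\}\subseteq[0,1)^\Z$, which is bounded, associativity gives $P(z)^{-1}\times(P(z)\times y)=(P(z)^{-1}\times P(z))\times y=I\times y=y$; symmetrically, for $\delta\in\Xi_P\subseteq K_P^\Z$, which is bounded, $P(z)\times(P(z)^{-1}\times\delta)=\delta$. These two identities show that $\times P(z)$ and $\times P(z)^{-1}$ invert one another wherever both land in the stated ranges, and they immediately give the easy direction of the final claim: if $\delta\in\Z^\Z$ and $y:=P(z)^{-1}\times\delta\in[0,1)^\Z$, then $P(z)\times y=\delta\in\Z^\Z$, so $y$ is a representative of a point of $\Omega_P$ and $\delta=P(z)\times y\in P(z)\times\{\Omega_P\}\subseteq\Xi_P$.

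For the topological content I would first check that $\times P(z)^{-1}$ is continuous on $K_P^\Z$: since $P(z)^{-1}$ is absolutely summable and the inputs take values in the finite set $K_P$, the tail of each convolution sum is uniformly small, so coordinatewise convergence $\delta^{(k)}\to\delta$ forces $P(z)^{-1}\times\delta^{(k)}\to P(z)^{-1}\times\delta$ coordinatewise. In particular $\times P(z)^{-1}$ carries $\Xi_P=\overline{P(z)\times\{\Omega_P\}}$ into $[0,1]^\Z$. The opposite map $x\mapsto P(z)\times\{x\}$ on $\Omega_P$ is continuous wherever the representative section $\{\,\cdot\,\}\colon\T\to[0,1)$ is continuous, i.e.\ off the set where some coordinate of $x$ equals $0\in\T$; this is the negligible exceptional set, which is precisely the meaning of ``almost continuous'' in the statement.

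The main obstacle is the forward implication of the final claim, namely upgrading $P(z)^{-1}\times\delta\in[0,1]^\Z$ to $[0,1)^\Z$ for $\delta\in\Xi_P$, equivalently showing that the value $1$ is never attained. A priori a coordinate of the limit may equal $1$, being a limit of values in $[0,1)$, and excluding this is exactly the delicate point and the mechanism behind the discontinuity of $\times P(z)$. My plan here is to argue that any $\delta\in\Xi_P$ with $(P(z)^{-1}\times\delta)(n)=1$ for some $n$ lies on the exceptional boundary set — where, after reduction modulo $1$, the two vertical maps cease to be mutually inverse — and to show this set is negligible, so that on its complement $\times P(z)$ and $\times P(z)^{-1}$ are genuine inverse homeomorphisms and the diagram yields the asserted almost-everywhere conjugacy. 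Pinning down that the half-open normalization $[0,1)$ is the correct one, and controlling the size of this boundary set, is where the real work lies.
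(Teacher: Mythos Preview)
Your overall plan coincides with the paper's. The paper dismisses the commutativity of the diagram and the ``almost continuous'' clause with the single sentence ``The first half statement is clear,'' and it proves the backward implication ($P(z)^{-1}\times\delta\in[0,1)^\Z\Rightarrow\delta\in\Xi_P$) exactly as you do, by setting $\tau:=P(z)^{-1}\times\delta$, observing that $P(z)\times\pi(\tau)=(0^\infty;0^\infty)$ so that $\pi(\tau)\in\Omega_P$, and concluding $\tau=\{\pi(\tau)\}$.

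The point you isolate as the ``main obstacle'' is not addressed by the paper. For the forward implication the paper writes only ``If $\delta\in\Xi_P$, then there exists $\omega\in\Omega_P$ such that $\delta=P(z)\times(\{\omega_n\};n\in\Z)$''; that is, it silently identifies $\Xi_P$ with the image $P(z)\times\{\Omega_P\}$ rather than with its closure, and never engages with the limit issue you raise. Your worry is therefore legitimate and is not resolved by the paper's own argument. In fact the forward implication fails if one insists on the closure: for $P(z)=1-2z$ the constant sequence $\delta\equiv-1$ lies in $\overline{P(z)\times\{\Omega_P\}}$ (take $\omega^{(k)}\in\Omega_P$ with $\{\omega^{(k)}_n\}=1-2^{n-k}$ for $n<k$ and $\{\omega^{(k)}_n\}=0$ for $n\ge k$; then $P(z)\times\{\omega^{(k)}\}\to\delta$), yet $(P(z)^{-1}\times\delta)_n=\sum_{j\ge1}2^{-j}=1\notin[0,1)$ for every $n$. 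If you read $\Xi_P$ as the image $P(z)\times\{\Omega_P\}$ itself, as the paper's proof implicitly does, the obstacle disappears and your argument is already complete.
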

\begin{proof}
The first half of the statement is clear. We prove the second half.
If $\xi\in\Xi_P$, then there exists $(x_n:n\in\Z)\in\Omega_P$ such that
$\xi=P(z)\times(\{x_n\}:n\in\Z)$. Therefore, we have
$$
P(z)^{-1}\times\xi=P(z)^{-1}\times P(z)\times(\{x_n\}:n\in\Z)
=(\{x_n\}:n\in\Z)\in[0,1)^\Z.
$$

Conversely, assume that $y:=P(z)^{-1}\times\xi\in[0,1)^\Z$ and 
$x=\pi(y)\in St(\T)$, $\pi:\R\to\T$ being the projection. 
Since $P(z)\times y=\xi\in K_P^\Z\subset\Z^\Z$, we have
$P(z)\times x=(0^\infty;0^\infty)\in St(\T)$, and hence, $x\in\Omega_P$.
Hence, $\xi=P(z)\times y=P(z)\times\{x\}$ for some $x\in\Omega_P$. 
Thus, $\xi\in\Xi_P$. 
\end{proof}

\begin{rem}{\rm
To recover a dynamical system on $St([0,1))$ from its symbolic representation
is known as Kaneko's formula~\cite{AKK}. It is nothing but to find an absolutely
summable $P(z)^{-1}$ as shown in the diagram in Theorem 2.
}\end{rem}

\begin{ex}{\rm
Let $P(z)=z^2-3z+1$. Then, we have
$$
\Omega_P=\{(x_n:n\in\Z)\in St(\T):x_{n+2}=3x_{n+1}-x_n~\mbox{(mod 1) for any}~n\in\Z\}.
$$
We rewrite the above as
\begin{align*}
\Omega_P=&\left\{(x_n:n\in\Z)\in St(\T):
\left(\begin{array}{c}x_{n+1}\\x_{n+2}\end{array}\right)\right.\\
&\left.=\left(\begin{array}{cc}0&1\\-1&3\end{array}\right)
\left(\begin{array}{c}x_n\\x_{n+1}\end{array}\right)\mbox{(mod 1) for any }n\in\Z\right\}.
\end{align*}
Observe that
$$
P(z)=z^2-3z+1=(z-\omega_+)(z-\omega_-),\quad \mbox{here}~\omega_\pm:=\frac{3\pm\sqrt{5}}{2}.
$$
Since $|\omega_+|=|\frac{3+\sqrt{5}}{2}|>1$,
$$
(z-\omega_+)^{-1}=(-1)\omega_+^{-1}(1-\omega_+^{-1}z)^{-1}
=-\omega_+^{-1}-\omega_+^{-2}z-\omega_+^{-3}z^2-\cdots,
$$
Similarly since $|\omega_-|=|\frac{3-\sqrt{5}}{2}|<1$, 
$$
(z-\omega_-)^{-1}=z^{-1}(1-\omega_- z^{-1})^{-1}=z^{-1}+\omega_- z^{-2}
+\omega_-^2 z^{-3}+\cdots. 
$$
Since 
$$\omega_+^{-1}\sum_{n=0}^\infty(\omega_+^{-1}\omega_-)^n
=\omega_+^{-1}\frac{1}{1-\omega_+^{-1}\omega_-}=\frac{1}{\omega_+-\omega_-}
=\frac{1}{\sqrt{5}},$$
we have the following Kaneko's formula:
\begin{align*}
&P(z)^{-1}=(z-\omega_+)^{-1}\times(z-\omega_-)^{-1}\\
&=(0^\infty;-\omega_+^{-1},-\omega_+^{-2},-\omega_+^{-3},\cdots)\times
(\cdots,\omega_-^3,\omega_-^2,\omega_-,1;0^\infty)\\
&=-\frac{1}{\sqrt{5}}(\cdots,\omega_-^3,\omega_-^2,\omega_-,1;\omega_+^{-1},
\omega_+^{-2},\omega_+^{-3},\cdots).
\end{align*}
It is interesting that the above formula can also be obtained by the so-called partial fractional decomposition.

\begin{align*}
&P(z)^{-1}=(z-\omega_+)^{-1}\times(z-\omega_-)^{-1}=\frac{1}{\omega_+-\omega_-}\{\frac{1}{z-\omega_+}-\frac{1}{z-\omega_-}\}\\
&=-\frac{1}{\sqrt{5}}\left\{\frac{1}{\omega_+}\frac{1}{1-z\omega_+^{-1}}+\frac{1}{z}\frac{1}{1-z^{-1}\omega_-}\right\}\\
&=-\frac{1}{\sqrt{5}}\left\{\frac{1}{\omega_+}\sum_{n=0}^\infty(\omega_+^{-1}z)^n+\frac{1}{z}\sum_{n=0}^\infty(\omega_-z^{-1})^n\right\}\\
&=-\frac{1}{\sqrt{5}}(\cdots,\omega_-^3,\omega_-^2,\omega_-,1;\omega_+^{-1},
\omega_+^{-2},\omega_+^{-3},\cdots).
\end{align*}

\begin{figure}[h]
\setlength{\unitlength}{0.25mm}
\begin{picture}(150,165)(-150,-10)
\put(0,0){\vector(1,0){160}}
\put(0,0){\vector(0,1){160}}
\put(0,150){\line(1,0){150}}
\put(150,0){\line(0,1){150}}

\put(164,-3){$x_{n}$}
\put(-15,164){$x_{n+1}$}

\put(0,0){\line(3,1){150}}
\put(0,50){\line(3,1){150}}
\put(0,100){\line(3,1){150}}

\put(90,10){$1$}
\put(75,45){0}
\put(60,90){$-1$}
\put(40,130){$-2$}
\end{picture}
\caption{Symbolic representation of $St([0,1))$: $\xi_{n+2}\in \{-2,-1,0,1\}.$}
\end{figure}
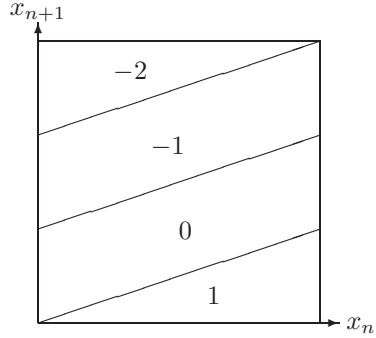
Identifying $\T$ with $[0,1)$, $\Omega_P$ can be considered as the set of trajectories of the dynamical system on $[0,1)\times[0,1)$ with the transformation
$(x,y)\mapsto(y,3y-x)~(\rm{mod}~1)$. That is, $x_{n+2}=\{3x_{n+1}-x_n\}~(\forall n\in\Z)$
(remind that $\{~\}$ is the fractional part).
The symbolic representation of $(x_n\in[0,1):n\in\Z)$
is the sequence $(\xi_n\in K_P:n\in\Z)$ with $K_P=\{-2,-1,0,1\}$ and
$\xi_{n+2}=x_n-3x_{n+1}+x_{n+2}\in K_P~(\forall n\in\Z)$ (Figure 1).

The topological entropy of these dynamical systems coincides with
$$\lim_{n\to\infty}\frac{1}{n}\log\#\Xi_P|_{\{1,2,\cdots,n\}}.$$
On the other hand, it is well known that it
coincides with the sum of logarithms of the eigenvalues greater than 1 of the
companion matrix, 
it also equal to the logarithms of Mahler measure of polynomial $P(z)$, see 
T. Catalan \cite{C} or examples 12.1 of B. Kitchens and K. Schmidt \cite{KK}.  
In our case, the companion matrix
$\left(\begin{array}{cc}0&1\\-1&3\end{array}\right)$ has eigenvalues
$\frac{3\pm\sqrt{5}}{2}$, and hence, the topological entropy is  $\log\frac{3+\sqrt{5}}{2}$.  
For its more dynamical properties, see D. Lind and B. Marcus \cite{LM}, K. Schmidt \cite{S}.
}\end{ex}

\section{Lifts of stream zeros}

Let $P(z)=a_kz^k+a_{k-1}z^{k-1}+\cdots+a_0\in\Z[z]$ be a primitive polynomial. 
Let $N\in\Z_{>0}$. Define 
$$
\OO_P:=\{(\x_n\in\R:n\in\Z):a_k\x_{n-k}+a_{k-1}\x_{n-k+1}+\cdots+a_0\x_n=0
~(\forall n\in\Z)\}
$$
and its restriction to $[-N,N]$
$$
\OO_P|_N:=\{(\x_n\in\R:n\in[-N,N]):(\x_n\in\R:n\in\Z)\in\OO_P\}. 
$$
For $l=1,2,\cdots$, 
define a $l\times(k+l)$-matrix $M_P^{(l)}$
\begin{equation}
M_P^{(l)}=\left(\begin{array}{ccccccccc}
a_k&a_{k-1}&\cdot&a_0&0&\cdot&\cdot&\cdot&0\\
0&a_k&a_{k-1}&\cdot&a_0&0&\cdot&\cdot&0\\
\cdot&\cdot&\cdot&\cdot&\cdot&\cdot&\cdot&\cdot&\cdot\\
\cdot&\cdot&\cdot&\cdot&\cdot&\cdot&\cdot&\cdot&\cdot\\
\cdot&\cdot&\cdot&\cdot&\cdot&\cdot&\cdot&\cdot&0\\
\cdot&\cdot&\cdot&\cdot&0&a_k&a_{k-1}&\cdot&a_0
\end{array}\right).
\label{(4)}\end{equation}

\begin{df}{\rm
We say that $(x_n:n\in\Z)\in St(\T)$ has a {\it lift} in $\OO_P$ 
if for any $N\in\Z_{>0}$, there exists 
$(\x_n\in\R:-N\le n\le N)\in\OO_P|_{N}$ such that 
$\pi(\x_n)=x_n$ for any $n\in[-N,N]$, 
where $\pi:\R\to\T$ is the projection. 
We call $(\x_n\in\R:-N\le n<N)$ as above a {\it local lift} of 
$(x_n:n\in\Z)$ in $\OO_P$. 
We say that $\Omega_P$ {\it has a lift} if any of 
$(x_n:n\in\Z)\in\Omega_P$ has a lift in $\OO_P$. 
}\end{df}

The following Lemma due to S. Akiyama \cite{A} is essential to prove 
Theorem 3. 

\begin{lem}{\rm \cite{A}}
In the hyperplane $H$ of $\R^{k+l}$ spanned by the row vectors of $M_P^{(l)}$, 
any integer point in $H$ can be written as a linear combination of the row vectors 
of $M_P^{(l)}$ with integer coefficients.
Hence, the parallelepiped spanned by the $l$ row vectors of $M_P^{(l)}$ contains 
integer points only at the vertices. 
\end{lem}

\begin{proof}
We first prove that if $\eta=(x_1,x_2,\cdots,x_l)$ is a row vector of size $l$ 
with real entries such that $\eta M_P^{(l)}$ is an integer vector, then $\eta$ is 
an integer vector. 

We may assume that $\eta$ does not contain irrational elements. Suppose 
to the contrary that it contains irrational elements and 
$x_i$ is one of them with the minimum $i$. Then the $i$-the 
element of $\eta M_P^{(l)}$ is irrational, contradicting the assumption. 
Hence, we may assume that $\eta$ is a rational vector. 
Furthermore, it is sufficient to prove that for any prime number $p$,  
$\nu_p(x_i)\ge 0~(i=1,\cdots,l)$, where $\nu_p$ is the discrete valuation at 
$p$ on $\Q$. 

Suppose to the contrary that $\nu_p(x_i)<0$ for some $p$ and $i$. 
Let the minimum $\nu_p(x_i)$ as this be $m_0$. Let $i_0\in\{1,\cdots,l\}$ be 
such that 
$$
\nu_p(x_{i_0})=m_0<\nu_p(x_i)~(1\le\forall i<i_0)~\mbox{and}~
\nu_p(x_{i_0})\le\nu_p(x_i)~(i_0<\forall i\le l).
$$
Since the greatest common divisor (GCD) of $a_k,\cdots,a_1,a_0$ is 1, there exists 
$a_j$ such that $\nu_p(a_j)=0$. Let $j_0$ be such that 
$$
0=\nu_p(a_{j_0})<\nu_p(a_j)~(0\le\forall j<j_0).
$$
Then, let the $(k+i_0-j_0)$-th element of $\eta M_P^{(l)}$ be $\xi$, then 
since 
$$\xi=x_1a_{j_0-i_0+1}+\cdots+x_{i_0}a_{j_0}+\cdots+x_la_{j_0-i_0+l}~(\mbox{we put}~
a_j=0~\mbox{if}~j\notin\{0,1,\cdots,k\})$$
and $\nu_p(x_ia_{j_0-i_0-i})>\nu_p(x_{i_0}a_{j_0})~(\forall i\ne i_0)$, we have
$$\nu_p(\xi)=\nu_p(x_{i_0})+\nu_p(a_{j_0})<0.$$ Thus, $\xi$ is not an integer. 

This implies that any integer point $(y_0,y_1,\cdots,y_{k+l})$ in the space spanned by the 
row vectors of $M_P^{(l)}$ is an integer combination of the  row vectors of $M_P^{(l)}$. 
\end{proof}

\begin{df}{\rm
We say that a primitive polynomial $P(z)=a_kz^k+a_{k-1}z^{k-1}+\cdots+a_0\in\Z[z]$ 
satisfies Condition (\#) if for any $l=1,2,\cdots$, there exists a 
$k\times(k+l)$-matrix $R$ whose entries are integers such that 
$\det\left(\begin{array}{c}M_P^{(l)}\\R\end{array}\right)=\pm1$. 
Hence, $\left(\begin{array}{c}M_P^{(l)}\\R\end{array}\right)$ has the inverse whose entries are 
integers. Let it be $(\tilde{L},L)$, where $\tilde{L},L$ are 
$(k+l)\times l$-matrix and $(k+l)\times k$-matrix. 
Let $\Lambda=LR$. It is a square matrix of size $k+l$ called a {\it lifting matrix} of $P(z)$ of size $l$. 
}\end{df}

The meaning of the lifting matrix is shown as follows. 

\begin{thm}
A primitive, polynomial $P(z)$ always satisfies {\rm Condition (\#)}, thereby implying that $\Omega_P$ admits a lift. Specifically, for any large $N\in\Z_{>0}$ and $l=2N-k+1$,
let $\Lambda$ be a lifting matrix of size $l$ as in Definition $3$.
Then, for any stream $(x_n:n\in\Z)\in\Omega_P$, the vector defined by
$\Lambda\mathbf{x}$
constitutes a local lift of $(x_n:-N\le n\le N)$ in $\tilde{\Omega}_P$, here $\mathbf{x}$ is the transpose vector of the truncated sequence $(x_n)_{-N \le n \le N}$, i.e, 
$\mathbf{x}:=(x_{-N}, x_{-N+1}, \dots, x_N)^T$. 
\end{thm}

Now, the structural properties of the lift $\tilde{\Omega}_P$ imply the surjectivite properties of the local projection and divisibility of the symbolic group:
\begin{cor}
\1 For any sufficiently large $N \in \mathbb{Z}_{>0}$, the coordinate-wise projection $\pi:\R^{\{N,-N+1,\cdots,N\}}\to\T^{\{N,-N+1,\cdots,N\}}$
induces a surjective homomorphism from $\tilde{\Omega}_P|_N$ to $\Omega_P|_N$ as topological additive groups equipped with the $\mathbb{Z}$-action.
\2 For any stream $X=(x_n:n\in\Z)\in\Omega_P$ and an integer $m\neq 0$, there exists
$Y=(y_n:n\in\Z)\in\Omega_P$ such that $mY=X$ in the coordinate-wise sense:
$$m(y_n:n\in\Z):=(my_n:n\in\Z)=(x_n:n\in\Z).$$
Now, the structural properties of the lift $\tilde{\Omega}_P$ imply the surjectivite properties of the local projection and divisibility of the symbolic group:
\begin{cor}
\1 For any sufficiently large $N \in \mathbb{Z}_{>0}$, the coordinate-wise projection $\pi:\R^{\{N,-N+1,\cdots,N\}}\to\T^{\{N,-N+1,\cdots,N\}}$
induces a surjective homomorphism from $\tilde{\Omega}_P|_N$ to $\Omega_P|_N$ as topological additive groups equipped with the $\mathbb{Z}$-action.

\2 For any stream $X=(x_n:n\in\Z)\in\Omega_P$ and an integer $m\neq 0$, there exists
$Y=(y_n:n\in\Z)\in\Omega_P$ such that $mY=X$ in the coordinate-wise sense:
$$m(y_n:n\in\Z):=(my_n:n\in\Z)=(x_n:n\in\Z).$$

\end{cor}\end{cor}

\begin{proof}
We prove Corollary 1 first using Theorem 3. 
\1 is clear except for ``surjective'', 
which also holds since $\Omega_P$ has a lift. 
To prove \2, let $X:=(x_n\in\T:n\in\Z)\in\Omega_P$. By Theorem 3, 
$X$ has a local lift in $\OO_P|_N$ for any large $N\in\Z$.  
That is, there exists $(\x_n^{(N)}\in\R:-N\le n\le N)\in\OO_P|_N$ such that 
$\pi(\x_n^{(N)})=x_n~(-N\le\forall n\le N)$. Then, 
$(\x_n^{(N)}/m\in\R:-N\le n\le N)\in\OO_P|_N$. 
Let $y_n^{(N)}=\pi(\x_n^{(N)}/m)\in\T~(\forall n\in[-N,N])$. Then, 
$y_n^{(N)}\in\Omega_P|_N$. Since $St(\T)$ is compact, there exists a 
limiting point, say $Y:=(y_n\in\T:n\in\Z)$ as $N\to\infty$ of 
$y_n^{(N)}\in\Omega_P|_N$. Then, it is clear that $Y\in\Omega_P$ and 
$mY=X$. 
\vspace{1em}\\
\underline{Proof of Theorem 3} :
Let $l=1,2,\cdots$ be arbitrary. 
To prove the Condition (\#), it is sufficient to prove that there exists a 
$(l+1)\times(k+1+l)$-integer matrix $R$ such that the set of row vectors 
of $\left(\begin{array}{c}M_P^{(l)}\\R\end{array}\right)$ is linear independent and the 
parallelepiped spanned by them contains the integer points only at vertices, 
since then the volume of the parallelepiped is 1 and hence, 
$\det\left(\begin{array}{c}M_P^{(l)}\\R\end{array}\right)=\pm1$. 

We construct $R$ inductively starting from the 1st row. By 
Lemma 1, $M_P^{(l)}$ has the property that the rows are linearly independent and 
the parallelepiped spanned by them, say $\Gamma_0$, contains the integer points 
only at vertices. 
Then, we add any integer vector, say $\gamma$, independent of the row vectors of 
$M_P^{(l)}$. Let $\Gamma$ be the parallelepiped spanned by $\Gamma_0$ and $\gamma$. 
Since both $\Gamma_0$ and $\gamma+\Gamma_0$ contains the integer points only 
at vertices, all the integer points in $\Gamma$ which are not the vertices are properly 
in between $\Gamma_0$ and $\gamma+\Gamma_0$. Let $\gamma'$ be any one of them 
and $\Gamma'$ be the parallelepiped spanned by $\Gamma_0$ and $\gamma'$. 
Then the volume of $\Gamma'$and the number of integer points which are not the vertices
 is strictly smaller than those of $\Gamma$. By this process, we finally find an integer vector 
 $\gamma_1$ independent of the row vectors of $M_P^{(l)}$ such that the parallelepiped 
 $\Gamma_1$ spanned by $\Gamma_0$ and $\gamma_1$ has the property that all the 
 integer points in it are only those at the vertices. 

Repeating this process, we find a 
$(k+l)$-dimensional parallelepiped $\Gamma_k$ in $\R^{k+l}$ which is spanned by 
$\Gamma_0$ and $\gamma_1,\gamma_2,\cdots,\gamma_k$ such that 
there are no integer points in $\Gamma_k$ other than the vertices. 
Thus, the volume of $\Gamma_k$ is $1$ and 
$\det\left(\begin{array}{c}M_P^{(l)}\\R\end{array}\right)=\pm1$, where 
$R=\left(\begin{array}{c}\xi_1\\\vdots\\\xi_k\end{array}\right)$. 

Let $N\in\Z_{>0}$ be large enough. Let $l=2N-k+1$. 
Now we prove that for any $(x_n:n\in\Z)\in\Omega_P$, 
$\Lambda\left(\begin{array}{c}x_{-N}\\x_{-N+1}\\\vdots\\x_N\end{array}\right)$ 
is a local lift of $(x_n:-N\le n\le N)$ in $\tilde{\Omega}_P$. 
Assume without loss of generality that $x_n\in[0,1)~(\forall n\in\Z)$. 
Let 
$$
\xi_n=a_kx_{n-k}+a_{k-1}x_{n-k+1}+\cdots+a_0x_n\in\Z~~(\forall n\in\Z). 
$$
Let $A=\left(\begin{array}{c}M_P^{(l)}\\R\end{array}\right)$ be as in Definition 3. 
Then, we have 
\begin{equation}
A\left(\begin{array}{c}x_{-N}\\x_{-N+1}\\\vdots\\x_{-N+l-1}\\x_{-N+l}\\\vdots\\
x_{-N+l+k-1}\end{array}\right)
=\left(\begin{array}{c}\xi_{-N+k}\\\xi_{-N+k+1}\\\vdots\\\xi_{-N+k+l-1}\\\theta_1\\\vdots\\\theta_k\end{array}\right), \label{(5)}
\end{equation}
where $\theta_1,\cdots,\theta_k\in\R$ are not necessarily be specified. 
Let
$$
\left(\begin{array}{c}\eta_{-N}\\\eta_{-N+1}\\\vdots\\\eta_{-N+l-1}\\\eta_{-N+l}\\\vdots\\\eta_{-N+l+k-1}\end{array}\right)
:=A^{-1}\left(\begin{array}{c}\xi_{-N+k}\\\xi_{-N+k+1}\\\vdots\\\xi_{-N+k+l-1}\\0\\\vdots\\0\end{array}\right).
$$
Note that $\eta_{-N},\eta_{-N+1},\cdots,\eta_{-N+l+k-1}$ are integers 
and $-N+l+k-1=N$. Then, it follows that 
$$
A\left(\begin{array}{c}x_{-N}-\eta_{-N}\\x_{-N+1}-\eta_{-N+1}\\\vdots\\x_{-N+l-1}-\eta_{-N+l-1}\\x_{-N+l}-\eta_{-N+l}\\\vdots\\x_N-\eta_N\end{array}\right)=
\left(\begin{array}{c}0\\0\\\vdots\\0\\\theta_1\\\vdots\\\theta_k\end{array}\right),
$$
and hence, 
$$
M_P^{(l)}\left(\begin{array}{c}x_{-N}-\eta_{-N}\\x_{-N+1}-\eta_{-N+1}\\\vdots
\\x_N-\eta_N\end{array}\right)=
\left(\begin{array}{c}0\\0\\\vdots\\0\end{array}\right).
$$
This implies that $(x_n-\eta_n:-N\le n\le N)$ is a local lift of 
$(x_n:-N\le n\le N)$ in $\tilde{\Omega}_P$. 

Note that 
$$
\left(\begin{array}{c}x_{-N}-\eta_{-N}\\x_{-N+1}-\eta_{-N+1}\\\vdots\\x_{-N+l-1}-\eta_{-N+l-1}\\x_{-N+l}-\eta_{-N+l}\\\vdots\\x_N-\eta_N\end{array}\right)=
A^{-1}\left(\begin{array}{c}0\\0\\\vdots\\0\\\theta_1\\\vdots\\\theta_k\end{array}\right)
=L\left(\begin{array}{c}\theta_1\\\vdots\\\theta_k\end{array}\right)
$$
and by (\ref{(5)}), 
$$
\left(\begin{array}{c}\theta_1\\\theta_2\\\vdots\\\theta_k\end{array}\right)=
R\left(\begin{array}{c}x_{-N}\\x_{-N+1}\\\vdots\\x_N\end{array}\right). 
$$
Therefore,  
$$
\left(\begin{array}{c}x_{-N}-\eta_{-N}\\x_{-N+1}-\eta_{-N+1}\\\vdots\\
x_N-\eta_N\end{array}\right)=LR
\left(\begin{array}{c}x_{-N}\\x_{-N+1}\\\vdots\\x_N\end{array}\right)
=\Lambda
\left(\begin{array}{c}x_{-N}\\x_{-N+1}\\\vdots\\x_N\end{array}\right).
$$
\end{proof}

\begin{rem}{\rm
In \cite{A}, the existence of the lift is proved directly. Here, we made a detour via 
the Condition $(\#)$, since it gives a concrete way to obtain it. 
}\end{rem}

\begin{lem}
It holds that 
$$
\det\left(\begin{array}{ccccccccc}a_1&a_0&0&\cdot&\cdot&\cdot&\cdot&\cdot&0\\
0&a_1&a_0&\cdot&\cdot&\cdot&\cdot&\cdot&0\\
\cdot&\cdot&\cdot&\cdot&\cdot&\cdot&\cdot&\cdot&\cdot\\
\cdot&\cdot&\cdot&\cdot&\cdot&\cdot&\cdot&\cdot&\cdot\\
\cdot&\cdot&\cdot&\cdot&\cdot&\cdot&\cdot&\cdot&\cdot\\
\cdot&\cdot&\cdot&\cdot&\cdot&\cdot&\cdot&\cdot&\cdot\\
\cdot&\cdot&\cdot&\cdot&\cdot&\cdot&\cdot&a_1&a_0\\
c_0&c_1&\cdot&\cdot&\cdot&\cdot&\cdot&c_{l-1}&c_l
\end{array}\right)=\det\left(\begin{array}{cc}a_1&a_0\\p&q\end{array}\right)^l, 
$$
where $c_i={l\choose i}p^{l-i}q^i~(i=0,1,\cdots,l)$.
\end{lem}
The proof is straightforward and omitted. 

\begin{ex}{\rm
Let $P(z)=3z-2$. Then since $\det\left(\begin{array}{cc}3&-2\\-1&1\end{array}\right)=1$, 
$\det A=1$ holds for 
$$
A=\left(\begin{array}{ccccc}3&-2&0&0&0\\0&3&-2&0&0\\0&0&3&-2&0\\0&0&0&3&-2\\
1&-4&6&-4&1\end{array}\right)
$$
by Lemma 2. 
Hence, $A^{-1}$ is an integer matrix. In fact, 
$$
A^{-1}=\left(\begin{array}{ccccc}-5&18&-20&8&16\\-8&27&-30&12&24\\-12&40&-45&18&36\\-18&60&-68&27&54\\-27&90&-102&40&81\end{array}\right).
$$
A lifting matrix is obtained as
$$
\Lambda=\left(\begin{array}{c}16\\24\\36\\54\\81\end{array}\right)(1,-4,6,-4,1)=
\left(\begin{array}{ccccc}16&-64&96&-64&16\\24&-96&144&-96&24\\36&-144&216&-144&36\\54&-216&324&-216&54\\81&-324&486&-324&81\end{array}\right)
$$
It holds that  
$\left(\cdots,\frac{14}{15},\frac{2}{5};\frac{3}{5},\frac{2}{5},\frac{1}{10},\cdots\right)\in\Omega_P$. 
In fact, 
$$M_P^{(4)}\left(\begin{array}{c}14/15\\2/5\\3/5\\2/5\\1/10\end{array}\right)
=\left(\begin{array}{c}2\\0\\1\\1\end{array}\right).$$
A local lift of it is obtained by
$ 
\Lambda\left(\begin{array}{c}14/15\\2/5\\3/5\\2/5\\1/10\end{array}\right)=\left(\begin{array}{c}344/15\\172/5\\258/5\\387/5\\1161/10\end{array}\right)$. In fact,  we have
$$
\left(\begin{array}{c}14/15\\2/5\\3/5\\2/5\\1/10\end{array}\right)\equiv\left(\begin{array}{c}344/15\\172/5\\258/5\\387/5\\1161/10\end{array}\right)~(\mbox{mod}~\Z)~\mbox{and}~
M_P^{(4)}\left(\begin{array}{c}344/15\\172/5\\258/5\\387/5\\1161/10\end{array}\right)=\left(\begin{array}{c}0\\0\\0\\0\\0\end{array}\right).
$$

If the relation $3x-2y\equiv0~(\rm{mod}~\Z)$ is considered as a function $x\mapsto y$, it is a 
2-valued function, and the inverse function $y\mapsto x$ is a 3-valued function. 
Hence, the trajectories to the forward starting at $x_0$ is represented by the paths 
on a 2-tree, and 
to the backward by the paths on a 3-tree. For example, 
the trajectory on the tree of 
$\left(\cdots,\frac{14}{15},\frac{2}{5};\frac{3}{5},\frac{2}{5},\frac{1}{10},\cdots\right)\in\Omega_P$ 
is as follows:

\begin{figure}[h]
\setlength{\unitlength}{0.3mm}
\begin{picture}(150,100)
\put(198,50){$\frac35$}

\put(210,54){\vector(2,1){30}}
\put(210,54){\vector(2,-1){30}}
\put(243,70){$\frac{9}{10}$}
\put(258,38){\vector(2,1){30}}
\put(258,38){\vector(2,-1){30}}
\put(245,36){$\frac25$}

\put(196,96){$x_0$}
\put(200,76){$\vdots$}

\put(292,20){$\frac{1}{10}$}
\put(292,52){$\frac35$}

\put(192,54){\vector(-1,1){30}}
\put(192,54){\vector(-1,0){30}}
\put(192,54){\vector(-1,-1){30}}
\put(151,54){$\frac25$}
\put(146,84){$\frac{11}{15}$}
\put(148,18){$\frac{1}{15}$}

\put(146,56){\vector(-1,1){30}}
\put(146,56){\vector(-1,0){30}}
\put(146,56){\vector(-1,-1){30}}

\put(100,86){$\frac{14}{15}$}
\put(103,56){$\frac35$}
\put(100,26){$\frac{4}{15}$}

\put(70,22){$\cdots$}
\put(70,42){$\cdots$}
\put(70,62){$\cdots$}
\put(70,82){$\cdots$}

\put(107,89){\circle{20}}
\put(155,57){\circle{18}}
\put(203,53){\circle{18}}
\put(248,39){\circle{18}}
\put(298,22){\circle{20}}

\end{picture}
\end{figure}

It is also represented as a trajectory on the graph of $(x,y)\in[0,1)\times[0,1)$ satisfying 
$3x-2y=0$ as follows, where the numbers along the lines are $\xi$-values. 
In our case, $\xi=(\cdots,2;0,1,1,\cdots)$. 

\vspace{1.5em}
\begin{figure}[h]
\setlength{\unitlength}{0.25mm}
\begin{picture}(500,170)(-60,-20)

\multiput(0,0)(220,0){2}{
\put(0,0){\vector(1,0){160}}
\put(0,0){\vector(0,1){160}}
\put(0,150){\line(1,0){150}}
\put(150,0){\line(0,1){150}}

\put(0,0){\line(1,1){150}}

\put(166,-3){$x$}
\put(-15,164){$y$}

\put(0,0){\line(2,3){100}}
\put(100,0){\line(2,3){50}}
\put(120,20){2}
\put(70,115){0}
\put(0,75){\line(2,3){50}}
\put(20,120){-1}
\put(50,0){\line(2,3){100}}
\put(120,95){1}

\put(86,-15){$\frac35$}
\put(56,-15){$\frac25$}
\put(11,-15){$\frac{1}{10}$}
\put(133,-15){$\frac{14}{15}$}}

\put(140,30){\vector(0,1){30}}
\put(140,60){\line(-1,0){80}}
\put(60,60){\vector(0,1){30}}
\put(60,90){\vector(1,0){50}}

\put(310,90){\vector(0,-1){30}}
\put(310,60){\line(-1,0){30}}
\put(280,60){\vector(0,-1){45}}
\put(280,15){\vector(-1,0){45}}

\end{picture}
\end{figure}

By the Kaneko formula, we have 
$$
P(z)^{-1}=\left(\cdots,\frac13\left(\frac23\right)^3,\frac13\left(\frac23\right)^2,\frac13\left(\frac23\right),
\frac13;~0^\infty\right)
$$
and for example, 
$$
\frac{14}{15}=x_{-2}=2\times\frac13+0\times\frac13\left(\frac23\right)
+1\times\frac13\left(\frac23\right)^2+1\times\frac13\left(\frac23\right)^3+\cdots.
$$

The topological entropy of this system is $\log 3$, which comes from two factors:\\
(i) the choice of branches whose entropy is $\log 2$, \\
(ii) the local expansion whose entropy is $\log(3/2)$. 
}\end{ex}

\section{Factorization of polynomials and stream zeros}

For a preparation, we prove the following lemma which may be well known. 

\begin{lem}
For a primitive polynomial $P(z)=a_kz^k+\cdots+a_1z+a_0\in\Z[z]$ with
$k\ge 1,~a_0,a_k\ne0$ and $\Delta\in\Z_{\ge1}$,
$\Omega_P\cap\{0,\frac{1}{\Delta},\frac{2}{\Delta},\cdots,\frac{\Delta-1}{\Delta}\}^\Z$
is a finite set with cardinality at most $\Delta^k$. 
\end{lem}

Clearly our lemma follows from the following Fact 1, and Fact 1 follows from Fact 2. \\
\underline{Fact 1}: If $X=(x_n:n\in\Z)\in St(\Z)$ satisfies that
$P(z)\times X=(0^\infty;0^\infty)~({\rm mod}~\Delta)$, then $X~({\rm mod}~\Delta)$ is
periodic with a period at most $\Delta^k$.\\
\underline{Fact 2}: Let $q$ be a power of a prime. If $X=(x_n:n\in\Z)\in St(\Z)$
satisfies that $P(z)\times X=(0^\infty;0^\infty)~({\rm mod}~q)$,
then $X~({\rm mod}~q)$ is periodic with a period at most $q^k$.

To prove Fact 2, we need the following Fact 3.
\\
\underline{Fact 3}: Let $p$ be a prime and $B=(b_n:n\in\Z)\in St(\Z)$ be periodic with a 
period $K$. If $X=(x_n:n\in\Z)\in St(\Z)$ satisfies that $P(z)\times X=B~({\rm mod}~p)$.
Then, $X~({\rm mod}~p)$ is periodic with a period at most $Kp^k$.
\begin{proof}\underline{Fact 3:}
Let $h$ be the minimum such that $a_h\ne0~({\rm mod}~p)$, which exists since
$P(z)$ is primitive. Then for any $n\in\Z$,
given $(x_{n-k+h},\cdots,x_{n-1})~({\rm mod}~p)$,
$x_n~({\rm mod}~p)$ is determined so that
$$a_kx_{n-k+h}+\cdots+a_hx_n+\cdots+a_0x_{n+h}=b_{n+h}~({\rm mod}~p)$$
for any $n\in\Z$.
That is, $x_n$ is determined as above by
\begin{multline*}
((x_{n-k+h},\cdots,x_{n-1})~({\rm mod}~p)),~n+h~({\rm mod}~K))\\
\in\{0,1,\cdots,p-1\}^{k-h}\times\{0,1,\cdots,K-1\}.
\end{multline*}
Hence, there exists $n<m$ with $m-n\le p^{k-h}K$ such that $x_n$ and $x_m$
correspond to the same element in the above right hand set, and hence, $x_n=x_m$.
This implies that $x_{n+1}=x_{m+1},~x_{n+2}=x_{m+2},~\cdots$.
Thus, $X$ is periodic with period at most $p^{k-h}K\le p^kK$, which prove Fact 3.

\underline{Fact 3 implies Fact 2:}
We prove Fact 2 for $q=p^l$ by the induction on $l=1,2,\cdots$,
where $p$ is an arbitrary prime. If $l=1$,
then Fact 2 is nothing but Fact 3 with $B=(0^\infty;0^\infty)$.
Assume that Fact 2 holds for $l=l_0\ge 1$.
Let 
$$X=(x_n;n\in\Z)\in St(\Z)~\mbox{satisfy that}~
P(z)\times X=(0^\infty;0^\infty)~({\rm mod}~p^{l_0+1}).$$
Let $x_n=x_n'+x_n''p^{l_0}$ with $x_n'\in\{0,1,\cdots,p^{l_0}-1\}$ and 
$x_n''\in\{0,1,\cdots,p-1\}$ for any $n\in\Z$.
Since $P(z)\times X=(0^\infty;0^\infty)~({\rm mod}~p^{l_0+1})$,
$$P(z)\times(x_n':n\in\Z)=(0^\infty;0^\infty)~({\rm mod}~p^{l_0}).$$
Hence, by the assumption of the induction $(x_n':n\in\Z)$ is
periodic with a period at most $p^{kl_0}$. 
Let $b_np^{l_0}=a_kx'_{n-k}+\cdots+a_1x'_{n-1}+a_0x'_n~(\forall n\in\Z)$. Then,
$B=(b_n:n\in\Z)\in St(\Z)$ has a common period with $(x_n':n\in\Z)$, say $K$  
such that $K\le p^{kl_0}$. Moreover, we have
\begin{multline*}
a_kx_{n-k}+\cdots+a_1x_{n-1}+a_0x_n\\
=(a_kx''_{n-k}+\cdots+a_1x''_{n-1}+a_0x''_n+b_n)p^{l_0}=0~~({\rm mod}~p^{l_0+1}).
\end{multline*}
Hence,
$$a_kx''_{n-k}+\cdots+a_1x''_{n-1}+a_0x''_n=-b_n~~({\rm mod}~p).$$
By Fact 3 and the above statement, $(x_n'':n\in\Z)$ is periodic with 
a period $KL$, where $L\le p^k$. Since $K$ is a period of $(x_n':n\in\Z)$, 
This implies that $X~({\rm mod}~p^{l_0+1})$ also has a period $KL$ such 
that $KL\le p^{k(l_0+1)}$. 
Thus, our statement holds for $l=l_0+1$, which completes the proof. 
\end{proof}

For two primitive polynomials $P(z),Q(z)\in\Z[z]$ with degree $k$ and $h$, respectively.
\begin{equation}\label{(5')}
\begin{array}{cc}
P(z)=a_kz^k+\cdots+a_1z+a_0&(a_k\ne0,a_0\ne0)\\
Q(z)=b_hz^h+\cdots+b_1z+b_0&(b_h\ne0,b_0\ne0).
\end{array}
\end{equation}
We define the resultant matrix $\RR$ between them which is a square matrix of degree $k+h$ as follows:\\
$$
\RR=\left(\begin{array}{cccccc}
a_k&a_{k-1}&\cdots&a_0&0&\cdots\\
\cdots&\ddots&\ddots&\ddots&\ddots&\ddots\\
\cdots&0&a_k&a_{k-1}&\cdots&a_0\\
b_h&b_{h-1}&\cdots&b_0&0&\cdots\\
\cdots&\ddots&\ddots&\ddots&\ddots&\ddots\\
\cdots&0&b_h&b_{h-1}&\cdots&b_0
\end{array}\right).
$$
Then, it is well known B. Van der Waerden \cite{V} that $P(z),Q(z)$ are coprime if and only if 
$\RR$ is regular. It also holds that if $P(z),Q(z)$ are coprime, then there exists
polynomials $A(z),B(z)$ with integer coefficients such that
$$\deg A<h,~\deg B<k~\mbox{and}~A(z)\times P(z)+B(z)\times Q(z)=\det \RR\in\Z\setminus\{0\}.$$
Denote $\det(\RR)$ by $Res(P,Q)$.
For a stream  zero's of $P(z)$, define $\dim\Omega_P$ to be
the maximal $l$ such that
$$\{(x_0,x_1,\cdots,x_{l-1}):(x_n,n\in\Z)\in\Omega_P\}=\T^l.$$
Clearly, $\dim\Omega_P=k,~\dim\Omega_Q=h$ for $P,Q$ in (\ref{(5')}). 

For $x=(x_n:n\in\Z)$ and $y=(y_n:b\in\Z)\in St(\R)$, define the {\it Hadamard product} 
$$x\odot y:=(x_ny_n:n\in\Z).$$ 
Let $\n=(\cdots,-1;0,1,2,\cdots)\in St(\Z)$ 
and $i=1,2,\cdots$, we define 
$$\n^{\odot i}=\underbrace{\n\odot\cdots\odot\n}_{\mbox{i times}}=(n^i:n\in\Z)\in St(\Z).$$

We now show there are close relationships between the factorization of a primitive polynomial 
$P(z)\in\Z[z]$ and a decomposition of 
$\Omega_P$ into the corresponding summands. 

\begin{thm}
Let $P(z),Q(z)\in\Z[z]$ be primitive polynomials as in $(\ref{(5')})$.\\ 
\1 $P(z)$ and $Q(z)$ are coprime if and only if $Res(P,Q)\ne0$. 
In this case, we have 
$$\Omega_P\cap\Omega_Q
\subset\{\frac{0}{\Delta},\frac{1}{\Delta},\cdots,\frac{\Delta-1}{\Delta}\}^\Z$$
with $\Delta=Res(P,Q)$.
Therefore, $P(z)$ and $Q(z)$ are coprime if and only if
$\Omega_P\cap\Omega_Q$ is a finite set. \\
\2 $P(z)$ is a factor of $Q(z)$ if and only if 
$\Omega_P\subset\Omega_Q$. \\
\3 If $P(z),Q(z)$ are coprime, then we have
$\Omega_{P\times Q}=\Omega_P\oplus \Omega_Q$, where ``$\oplus$'' implies 
the semi-direct sum, that is, the intersection of the summands is a finite set. \\
\4 If all the roots of $P(z)=0$ are simple, then for any $H=1,2,\cdots$, we have
$$
\Omega_{P^H}=\sum_{i=0}^{H-1}\n^{\odot i}\odot\Omega_P.
$$
\end{thm}
\begin{proof}
\1 Let $P(z),Q(z)$ be coprime. Then, the resultant matrix $\RR$ is regular
and the entries of $\RR^{-1}$ are in $\Z/\Delta$, where
$\Delta=Res(P,Q)\in\Z\setminus\{0\}$. Since for any $k\in\Z$ and
$\xi=(\xi_i)_{i\in\Z}\in\Omega_P\cap\Omega_Q$, we have
$\RR\left(\begin{array}{c}\xi_{k+1}\\\xi_{k+2}\\\cdots\\\xi_{k+n+m}\end{array}\right)
\in\Z^{k+n+m}$, it holds that
$\left(\begin{array}{c}\xi_{k+1}\\\xi_{k+2}\\\cdots\\\xi_{n+m}\end{array}\right)
\in\frac{1}{\Delta}\Z^{n+m}$. This implies that
$\Omega_P\cap\Omega_Q\subset
\{\frac{0}{\Delta},\frac{1}{\Delta},\cdots,\frac{\Delta-1}{\Delta}\}^\Z$, and hence,
$\Omega_P\cap\Omega_Q$ is a finite set by Lemma 3. 

Conversely, if $P(z),Q(z)$ are not coprime, then there exists a common factor of
them with degree at least 1. Hence we have
$\dim(\Omega_P\cap\Omega_Q)\ge1$, and hence, $\Omega_P\cap\Omega_Q$ 
is an infinite set. 

\2 Assume that $P(z)$ is a factor of $Q(z)$ so that $Q(z)=P(z)\times R(z)$,
where $R(z)$ is a polynomial with integer coefficients.
Then for any $\xi\in\Omega_P$, we have
$$
Q(z)\times\xi=(P(z)\times R(z))\times\xi=R(z)\times(P(z)\times\xi)
=(0^\infty;0^\infty).
$$
Hence, we have $\xi\in\Omega_Q$ and
$\Omega_P\subset\Omega_Q$.

Conversely, if $P(z)$ is not a factor of $Q(z)$, then there exists a
nontrivial factor $R(z)$ of $P(z)$ such that $R(z),Q(z)$ are coprime.
Then by \1, $\Omega_Q\cap\Omega_R$ is a finite set, while $\Omega_R$ is not. 
Hence, $\Omega_R\subset\Omega_Q$ does not hold. 
Thus, $\Omega_P\subset\Omega_Q$ does not hold since 
$\Omega_P\supset\Omega_R$.

\3 By \2 and the fact that $\Omega_{P\times Q}$ is a additive group, it holds that
\begin{equation}\label{(6)}
\Omega_{P\times Q}\supset\Omega_P+\Omega_Q.
\end{equation}
Since $P(z),Q(z)$ are coprime, there exist polynomials with integer coefficients $A(z),B(z)$ with $\deg A(z)\le h$ and $\deg B(z)\le k$ 
such that
\begin{equation}P\times A+Q\times B=\Delta,
\label{(6.5)}\end{equation}
where $\Delta=Res(P,Q)$. 
By (\ref{(6.5)}), we have
\begin{equation}\label{(7)}
P\times Q\times B=P\times(\Delta-P\times A),~P\times Q\times A
=Q\times(\Delta-Q\times B).
\end{equation}

Take $X:=(x_n:n\in\Z)\in\Omega_{P\times Q}$ arbitrary. 
By Corollary 1, 
there exists $Y:=(y_n:n\in\Z)\in\Omega_{P\times Q}$ such that 
$\Delta Y=X$. 
Since $Y\in\Omega_{P\times Q}$, we have by (\ref{(7)}) that 
$$
P\times(\Delta-P\times A)\times Y=
Q\times(\Delta-Q\times B)\times Y=(0^\infty;0^\infty). 
$$
Let 
$$
(u_n:n\in\Z)=(\Delta-P\times A)\times Y,~
(v_n:n\in\Z)=(\Delta-Q\times B)\times Y. 
$$
Then, from above $(u_n:n\in\Z)\in\Omega_P$ and $(v_n:n\in\Z)\in\Omega_Q$. 
Moreover, by (\ref{(6.5)}), 
$$(u_n:n\in\Z)+(v_n:n\in\Z)=
(2\Delta-P\times A-Q\times B)\times Y=\Delta Y=X. $$
Thus, we have $\Omega_{P\times Q}\subset\Omega_P+\Omega_Q$, hence
together with (\ref{(6)}) and \1, we complete the proof. 

\4 The equation 
$$\Omega_{P^h}=\sum_{i=0}^{h-1}\n^{\odot i}\odot\Omega_P$$
follows from the equation  
$$
\OO_{P^h}=\sum_{i=0}^{h-1}\n^{\odot i}\odot\OO_P. 
$$

For any $X=(x_n:n\in\Z)\in St(\T)$, define the derivative $X'\in St(\R)$ to be 
$\sum_{n\in\Z}nx_nz^{n-1}$ as usual. That is, 
$X'=\sigma(\n\odot X)$, 
where $\sigma$ is the shift on $St(\T)$. 
Clearly, if $X\in\Omega_P$, then $(P\times X)'=(0^\infty;0^\infty)$.  

Hence if $X\in\Omega_{P^{h-1}}$, then we have 
$$(0^\infty;0^\infty)=(P^h\times X)'=hP^{h-1}P'\times X+P^h\times X'=P^h\times\sigma(\n\odot X). $$
Therefore, $\sigma(\n\odot X)\in\Omega_{P^h}$ and $\n\odot X\in\Omega_{P^h}$. 
Thus, $\n\odot\Omega_{P^{h-1}}\subset\Omega_{P^h}$. 
It follows from this that 
\begin{equation}\label{(8)}
\Omega_{P^h}\supset\sum_{i=0}^{h-1}\n^{\odot i}\odot\Omega_P~~(h=1,2,\cdots).
\end{equation}

To prove the opposite inclusion, we use the local lifts of these terms. 
By Corollary 1, 
$$
\OO_{P^h}|_N\supset\sum_{i=0}^{h-1}\n^{\odot i}\odot\OO_P|_N
$$
holds for any large $N\in\Z_{>0}$. Since the both sides are vector spaces over 
$\R$, they coincide if both sides have the same 
dimension $kh$ as that of the vector spaces over $\R$. 

For this purpose, we need the following lemma which is a generalization of the Vandermonde's 
determinant due to R. P. Flowe and A. G. Harris \cite{FH}. 

\begin{lem}{\rm\cite{FH}}
Let $l_1,\cdots,l_h$ and $L$ be positive integers such that $l_1+\cdots+l_h=L$. Let 
$$
\M[x_1,\cdots,x_h:L:l_1,\cdots,l_h]:=
\left(\M[x_1,L,l_1]\M[x_2,L,l_2]\cdots\M[x_h,L,l_h]\right)
$$
be the $L\times L$-matrix whose columns from 1st to $l_1$-th is $\M[x_1,L,l_1]$ 
and from $l_1+1$-th to $l_1+l_2$-th is $\M[x_2,L,l_2]$, $\cdots$, where 
$x_1,x_2,\cdots,x_h\in\C$ and 
$$
\M(x,L,l)=\left(\begin{array}{ccccc}1&0&0&\cdots&0\\x&x&x&\cdots&x\\
x^2&2x^2&2^2x^2&\cdots&2^{l-1}x^2\\
\cdot&\cdot&\cdot&\cdots&\cdot\\\cdot&\cdot&\cdot&\cdots&\cdot\\
x^{L-1}&(L-1)x^{L-1}&(L-1)^2x^{L-1}&\cdots&(L-1)^{l-1}x^{L-1}
\end{array}\right).
$$
Then
\begin{equation}
\det\M[x_1,\cdots,x_h:L:l_1,\cdots,l_h]=C\prod_{i=1}^hx_i^{l_i(l_i-1)/2}\prod_{i<j}(x_j-x_i)^{l_il_j}
\end{equation}
with $C=\prod_{i=1}^h\prod_{j=1}^{l_i}j!\ne0$. 
\end{lem}

\noindent\underline{Continuation of the proof of \4 of Theorem 4:}~
Let $x_1,x_2,\cdots,x_s$ be distinct real numbers and 
$$y_1,y_2,\cdots,y_u;\y_1,\y_2,\cdots,\y_u$$ 
be distinct complex numbers not being real which are the set of roots of $P(z)=0$, where $\y$ is the complex conjugate of $y$. 
Then, $s+2u=k$ and the roots of $P(z)^H=0$ are as above with the multiplicity $H$. 
Hence, by Lemma 2, the set of vectors 
\begin{align*}
&\{\n^{\odot i}(x_i^n:n\in\Z):i=0,1,\cdots,H-1;~j=1,2,\cdots,s\}\\
\cup&\{\n^{\odot i}(y_i^n:n\in\Z):i=0,1,\cdots,H-1;~j=1,2,\cdots,u\}\\
\cup&\{\n^{\odot i}(\y_i^n:n\in\Z):i=0,1,\cdots,H-1;~j=1,2,\cdots,u\}
\end{align*}
spans $kH$-dimensional vector space over $\C$. 
Replacing the pair 
$$(y_i^n:n\in\Z),~(\y_i^n:n\in\Z)~\mbox{by}~
(\Re(y_i^n):n\in\Z),~(\Im(y_i^n):n\in\Z),$$
we get the same space whose restriction to the real parts is spanned by 
the replaced one. This vector space over $\R$ is same as 
the vector space spanned by 
$$
\OO_P,~\n\odot\OO_P,~\cdots,~\n^{\odot (H-1)}\OO_P,
$$
and hence, has the dimension $kH$ which coincides the 
dimension of the space $\OO_{P^H}$. Thus, we proved the opposite 
inclusion of (\ref{(8)}). 
\end{proof}

The following Theorem is already known in \cite{AKK} except for the statement of 
semi-direct.

\begin{thm}
Let $P(z)\in\Z[z]$ be a primitive polynomial with the resolution into 
irreducible factors 
$$P(z)=Q_1(z)^{r_1}\cdots Q_h(z)^{r_h},$$
where $Q_i(z)\in\Z[z]~(i=1,\cdots,h)$. 
Then, we have 
$$
\Omega_P=\sum_{i=1}^k\sum_{j=1}^{r_i-1}\n^{\odot j}\odot\Omega_{Q_j},
$$
where the sum is semi-direct. 
\end{thm}
\begin{proof}
By \3 and \4 of Theorem 4, we have 
\begin{align*}
&\Omega_P=\sum_{i=1}^k\Omega_{{Q_i}^{r_i}}\\
&=\sum_{i=1}^k\sum_{j=1}^{r_i-1}\n^{\odot j}\odot\Omega_{Q_j}.
\end{align*}
\end{proof}

As a consequence of Theorem 5, we have the following Corollary. 
\begin{cor}
For polynomials $P(z),Q(z),R(z)$ with integer coefficients, assume that
$P(z)=Q(z)\times R(z)$ and $Res(Q,R)=\pm1$.
Then dynamical systems $(\Omega_P,\sigma)$ and $(\Omega_Q,\sigma)\otimes(\Omega_R,\sigma)$ are conjugate, where ``$\otimes$'' implies
the direct product of dynamical systems. Hence, dynamical systems 
$(\Xi_P,\sigma)$ and
$(\Xi_Q,\sigma)\otimes(\Xi_R,\sigma)$ are conjugate.
\end{cor}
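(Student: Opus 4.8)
The plan is to reduce the statement to the direct-sum decomposition already proved in Theorem 3(iv) and then upgrade the resulting algebraic isomorphism to a topological conjugacy by a compactness argument. Since $\Delta_{Q,R}=\pm1\neq0$, the resultant matrix of $Q$ and $R$ is regular, so $Q(z)$ and $R(z)$ are coprime. Applying the final assertion of Theorem 3(iv) to the factorization $P=Q\times R$ in the special case $\Delta_{Q,R}=\pm1$, I obtain the internal direct-sum decomposition $\Omega_P=\Omega_{Q\times R}=\Omega_Q\oplus\Omega_R$. Unwinding the notation, this means $\Omega_P=\Omega_Q+\Omega_R$ together with $\Omega_Q\cap\Omega_R=\{(0^\infty;0^\infty)\}$, so every $x\in\Omega_P$ has a unique representation $x=u+v$ with $u\in\Omega_Q$ and $v\in\Omega_R$.

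Next I introduce the addition map $\phi:\Omega_Q\times\Omega_R\to\Omega_P$ defined by $\phi(u,v)=u+v$. By the direct-sum decomposition, $\phi$ is a bijection: it is surjective because $\Omega_P=\Omega_Q+\Omega_R$, and injective because the decomposition is unique. It is continuous, since addition is continuous for the weak product topology on $St(\T)$. Crucially, $\phi$ intertwines the two dynamics: the shift on the product $(\Omega_Q,\sigma)\otimes(\Omega_R,\sigma)$ acts by $(u,v)\mapsto(\sigma u,\sigma v)$, and because $\sigma$ is additive and acts componentwise, $\phi(\sigma u,\sigma v)=\sigma u+\sigma v=\sigma(u+v)=\sigma\,\phi(u,v)$. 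Finally, $\Omega_Q\times\Omega_R$ is compact as a product of the compact sets $\Omega_Q$ and $\Omega_R$, while $\Omega_P$ is Hausdorff, so the continuous bijection $\phi$ is automatically a homeomorphism. Thus $\phi$ is a topological conjugacy and $(\Omega_P,\sigma)\cong(\Omega_Q,\sigma)\otimes(\Omega_R,\sigma)$.

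For the second assertion I invoke Theorem 2, which supplies conjugacies $(\Omega_P,\sigma)\cong(\Xi_P,\sigma)$, $(\Omega_Q,\sigma)\cong(\Xi_Q,\sigma)$ and $(\Omega_R,\sigma)\cong(\Xi_R,\sigma)$ (this is the point where hyperbolicity of $P,Q,R$ is used, as in that theorem). Composing these with the conjugacy just obtained, and using that conjugacy of factors yields conjugacy of direct products, I conclude $(\Xi_P,\sigma)\cong(\Omega_P,\sigma)\cong(\Omega_Q,\sigma)\otimes(\Omega_R,\sigma)\cong(\Xi_Q,\sigma)\otimes(\Xi_R,\sigma)$. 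The one genuinely delicate step—the one I would watch most carefully—is the passage from a continuous equivariant group isomorphism to a conjugacy of dynamical systems: this is exactly where compactness of $\Omega_Q\times\Omega_R$ is indispensable, since otherwise the set-theoretic inverse of $\phi$ need not be continuous. Every other ingredient is a direct reading of Theorem 3(iv) and Theorem 2.
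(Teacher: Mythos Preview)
Your proof is correct and is precisely the argument the paper has in mind: the corollary is stated without proof because it is meant to follow immediately from Theorem~3(iv) (giving $\Omega_P=\Omega_Q\oplus\Omega_R$) together with Theorem~2, and you have simply spelled out the routine topological step (the addition map is a continuous equivariant bijection between compact Hausdorff spaces, hence a conjugacy). Your parenthetical remark that hyperbolicity is implicitly needed for the passage to $(\Xi_P,\sigma)$ via Theorem~2 is also apt.
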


\section{Stream zeros and the strong automorphism group}

Let 
\begin{equation}\label{(10)}
P(z)=a_kz^k+a_{k-1}z^{k-1}+\cdots+a_1z+a_0\in\Z[z]~(k\ge1,a_0\ne0,a_k\ne0)
\end{equation}
be a primitive polynomial and 
\begin{equation}\label{10.1}
C_P=\left(\begin{array}{ccccc}0&1&0&\cdots&0\\0&0&1&\cdots&0\\
\cdots&\cdots&\cdots&\ddots&\cdots\\0&0&\cdots&\cdots&1\\
-\frac{a_k}{a_0}&-\frac{a_{k-1}}{a_0}&\cdots&\cdots&-\frac{a_1}{a_0}\end{array}\right).
\end{equation}
be the companion matrix. 

Since $C_P$ is not an integer matrix if $a_0\ne\pm1$, the action of $C_P$ to 
$\T^k$ is multivalued. In fact, we generally define the action of a
$(k\times k)$-matrix $F$ with rational entries to $\T^k$ as 
$\pi\circ F\circ\pi^{-1}$, where $\pi:\R^k\to\T^k$ is the coordinate-wise projection. 
We call $\pi\circ F\circ\pi^{-1}$ the {\it induced action} of $F$ and denote 
$\overline{F}$. 
For example, if $F=(p/q)I_1$ ($I_k$ being the identity matrix of degree $k$) 
with irreducible fraction $p/q$ and $x\in\T$, then 
$$\overline{F}x:=\pi\left(\frac{p}{q}\times\{\{x\}+i:i\in\Z\}\right)
=\left\{\frac{\{px\}}{q}+\frac{i}{q}:i=0,1,\cdots,q-1\right\},$$
where $\{x\}$ being the representative of $x\in\T$ in $[0,1)$. 
Hence, 
\begin{align*}
\Omega_P=&\{(x_n:n\in\Z)\in\T^\Z:a_kx_{n-k}+\cdots+a_1x_{n-1}+a_0x_n=0~
(\forall n\in\Z)\}\\
=&\left\{(x_n:n\in\Z)\in St(\T):
\left(\begin{array}{c}x_{n-k}\\\\\cdot\\\cdot\\x_{n-1}\\x_n\end{array}\right)=
\overline{C}_P\left(\begin{array}{c}x_{n-k-1}\\\cdot\\\cdot\\x_{n-2}\\x_{n-1}\end{array}\right)~(\forall n\in\N)\right\}
\end{align*}
holds.
\begin{lem}
\1 The correspondence $F\mapsto\overline{F}$ defined on the set of 
$(k\times k)$-matrices with rational entries is one-to-one. \\
\2 Any $(k\times k)$-integral matrix $F$ can be identified with 
$\overline{F}$ consistently. Moreover, for any $(k\times k)$-matrix $G$ with 
rational entries, we have\\ 
\rm(1) $\overline{FG}=F\overline{G}$\\ 
\rm(2) $\overline{GF}=\overline{G}F$. 
\end{lem}
\begin{proof}
\1 Assume that $F\ne G$. Then,  there exists $v\in\R^k$ 
such that $(F-G)v$ is a nonzero irrational vector. Let $u=\pi v$, Then, there exist 
$S,S'\subset \Q^k$ 
such that $\overline{F}u=\pi F\pi^{-1}u=\pi(Fv+S)$ and 
$\overline{G}u=\pi G\pi^{-1}u=\pi(Gv+S')$. Therefore, 
$\O\ne\pi(F-G)v\in(\overline{F}-\overline{G})u$. 
Hence, $\overline{F}\ne\overline{G}$. 

\2 Since $Fw$ is an integer vector if $w$ is so, 
for any $v\in\pi^{-1}u$ with $u\in\T^k$, $Fv\equiv F\{u\}~(\mbox{mod}~\Z^k)$. 
That is, $\overline{F}u=\pi F\pi^{-1}u\equiv F\{u\}~(\mbox{mod}~\Z^k)$. 
In this sense, we can identify $\overline{F}$ with $F$. \\
(1) Note that $\pi F\pi^{-1}\pi=\pi F$. Hence, 
$$\overline{FG}=\pi FG\pi^{-1}=\pi F\pi^{-1}\pi G\pi^{-1}=\overline{F}~\overline{G}
=F\overline{G}.$$
(2) Take an arbitrary $u\in\T^k$ and let $\pi v=u$ for some $v\in\R^k$. 
Since $F\Z^k=\Z^k$, it holds that 
$$\overline{GF}u=\pi GF\pi^{-1}u=\pi GF(v+\Z^k)=\pi G(Fv+\Z^k)=\pi G\pi^{-1}Fu
=\overline{G}Fu.$$ 
Thus, $\overline{GF}=\overline{G}F$ holds. 
\end{proof}

Let $\Omega_P$ be the set of stream zero's of $P(z)$. 
A mapping $\varphi:\Omega_P\to\Omega_P$ is called an 
{\it automorphism} of $\Omega_P$ if it is an isomorphism of $\Omega_P$ as the topological linear space over $\Z$ such that 
$\varphi\circ\sigma=\sigma\circ\varphi$, where $\sigma$ is the shift. 
Furthermore, we call it a {\it strong automorphism} if for any 
$(x_n:n\in\Z),~(y_n:n\in\Z)\in\Omega_P$ with $\varphi(x_n:n\in\Z)=(y_n:n\in\Z)$, 
\begin{align*}
&(y_0,y_1,\cdots,y_{k-1})\mbox{ is determined by }(x_0,x_1,\cdots,x_{k-1}),~\mbox{and}\\
&(x_0,x_1,\cdots,x_{k-1})\mbox{ is determined by }(y_0,y_1,\cdots,y_{k-1}).
\end{align*}
Since a strong automorphism $\varphi$ commutes with the shift, it holds 
for any $n\in\Z$ that 
\begin{align*}
&(y_n,y_{n+1},\cdots,y_{n+k-1})\mbox{ is determined by }
(x_n,x_{n+1},\cdots,x_{n+k-1}),~\mbox{and}\\
&(x_n,x_{n+1},\cdots,x_{n+k-1})\mbox{ is determined by }
(y_n,y_{n+1},\cdots,y_{n+k-1}).
\end{align*}

Note that the shift $\sigma$ on $\Omega_P$ is an automorphism, it is a strong automorphism as well only if $a_0=\pm1$ and $a_k=\pm1$. 

\begin{df}{\rm
By $Saut_P$, we denote the set of $k\times k$ integer matrices B such that $BC_P=C_PB$ and $\det B=\pm1$. 
}\end{df}

Note that $BC_P^{-1}=C_P^{-1}B$ also follows if $B\in Saut_P$ since 
$$
BC_P^{-1}=C_P^{-1}C_PBC_P^{-1}=C_P^{-1}BC_PC_P^{-1}=C_P^{-1}B.
$$
The following theorem states that each of strong automorphisms on $\Omega_P$ can be represented by a matrix from  $Saut_P$. 

\begin{thm} Assume $(\ref{(10)})$. \\
\1 For any $B\in Saut_P$, let $B=(b_{i,j})_{1\le i,j\le k}$. Then, $b_{1,k},b_{2,k},\cdots,b_{k-1,k}$ are multiples of $a_0$, while $b_{k,k}$ and $a_0$ are coprime. \\
\2 A mapping $\varphi:\Omega_P\to\Omega_P$ is a strong automorphism if and only there exists 
$B\in Saut_P$ such that for any $(x_n:n\in\Z)\in\Omega_P$ and $n\in\Z$, 
\begin{equation}
\left(\begin{array}{c}y_{n}\\y_{n+1}\\\cdot\\\cdot\\y_{n+k-1}\end{array}\right)
=B\left(\begin{array}{c}x_{n}\\x_{n+1}\\\cdot\\\cdot\\x_{n+k-1}\end{array}\right). \label{(11)}
\end{equation}
holds, where $(y_n:n\in\Z):=\varphi(x_n:n\in\Z)$. Therefore, a strong automorphism $\varphi$ 
can be identified with $B\in Saut_P$ as above. 
\end{thm}

\begin{proof} 
\1 Assume that $a_0\ne\pm1$ since otherwise there is nothing to prove. Since $BC_P=C_PB$, by comparing the first rows, we have
$$
-\frac{a_k}{a_0}b_{1,k}=b_{2,1},~b_{1,1}-\frac{a_{k-1}}{a_0}b_{1,k}=b_{2,2},~
\cdots,~b_{1,k-1}-\frac{a_1}{a_0}b_{1,k}=b_{2,k}. 
$$
As $B$ is an integer matrix, it follows that 
$\frac{a_k}{a_0}b_{1,k},\frac{a_{k-1}}{a_0}b_{1,k},\cdots,\frac{a_1}{a_0}b_{1,k}$ are integers. 
Since $P(z)$ is primitive and GCD of $a_0,a_1,\cdots,a_k$ is 1,  
this is possible only if $b_{1,k}$ is a multiple of $a_0$. 
In the same way, by comparing the 2nd, 3rd, ... , $(k-1)$-th rows, it holds that 
$b_{1,k},b_{2,k},\cdots,b_{k-1,k}$ are multiples of $a_0$. 
Suppose that $b_{k.k}$ and $a_0$ are not coprime. Then, $b_{1,k},b_{2,k},\cdots,b_{k-1,k}$ 
and $b_{k,k}$ have common factor larger than 1. This contradicts with $\det B=\pm1$. 

\2 Let $\varphi:\Omega_P\to\Omega_P$ be a strong automorphism.   
For any $X\in\Omega_P$ with $X=(x_n:n\in\Z)$, let 
$Y:=(y_n:n\in\Z)=\varphi(x_n:n\in\Z)\in\Omega_P$. 
Then, for any $n\in\Z$, 
\begin{align*}
&(y_n,y_{n+1},\cdots,y_{n+k-1})\mbox{ is determined by }(x_n,x_{n+1},\cdots,x_{n+k-1}),
\mbox{ and}\\
&(x_n,x_{n+1},\cdots,x_{n+k-1})\mbox{ is determined by }(y_n,y_{n+1},\cdots,y_{n+k-1}). 
\end{align*}
Since $\varphi$ is an automorphism and 
$\Omega|_{\{n,n+1,\cdots,n+k-1\}}=\T^k~(\forall n\in\Z)$, there exist integer matrices $\cdots,B_{-1},B_0,B_1,\cdots$ 
whose inverses are also integer matrices such that 
$$
\left(\begin{array}{c}y_{n}\\y_{n+1}\\\cdot\\\cdot\\y_{n+k-1}\end{array}\right)
=B_n\left(\begin{array}{c}x_{n}\\x_{n+1}\\\cdot\\\cdot\\x_{n+k-1}\end{array}\right)
$$
holds for any $n\in\Z$. Since $\varphi\circ\sigma=\sigma\circ\varphi$, 
we have
$$
B_1\left(\begin{array}{c}x_{1}\\x_{2}\\\cdot\\\cdot\\x_{k}\end{array}\right)
=B_1\left(\begin{array}{c}(\sigma X)_0\\(\sigma X)_1\\\cdot\\\cdot\\(\sigma X)_{k-1}\end{array}\right)=B_1B_0^{-1}\left(\begin{array}{c}(\sigma Y)_0\\(\sigma Y)_1\\\cdot\\\cdot\\(\sigma Y)_{k-1}\end{array}\right)$$
$$=B_1B_0^{-1}\left(\begin{array}{c}y_{1}\\y_{2}\\\cdot\\\cdot\\y_{k}\end{array}\right)=B_1B_0^{-1}B_1\left(\begin{array}{c}x_{1}
\\x_{2}\\\cdot\\\cdot\\x_{k}\end{array}\right). $$
Therefore we have $B_1=B_1B_0^{-1}B_1$, and hence, $B_1=B_0$ follows. 
In the same way, it follows that $B_n=B_0$ for all $n\in\Z$. 
Let $B_n=B~(\forall n\in\Z)$. 

By \2 of Lemma 4, it holds that 
\begin{align*}
&\pi BC_P\pi^{-1}\left(\begin{array}{c}x_{0}\\x_{1}\\\cdot\\\cdot\\x_{k-1}\end{array}\right)
=B\pi C_P\pi^{-1}\left(\begin{array}{c}x_{0}\\x_{1}\\\cdot\\\cdot\\x_{k-1}\end{array}\right)\\
=&B\left(\begin{array}{c}x_{1}\\x_{2}\\\cdot\\\cdot\\
x_{k}+\{0,\frac{1}{a_0}\cdots\frac{a_0-1}{a_0}\}\end{array}\right)
=\left(\begin{array}{c}y_{1}\\y_{2}\\\cdot\\\cdot\\
y_{k}+\{0,\frac{1}{a_0}\cdots\frac{a_0-1}{a_0}\}\end{array}\right)\\
=&\pi C_P\pi^{-1}\left(\begin{array}{c}y_{0}\\y_{1}\\\cdot\\\cdot\\y_{k-1}\end{array}\right)
=\pi C_PB\pi^{-1}\left(\begin{array}{c}x_{0}\\x_{1}\\\cdot\\\cdot\\x_{k-1}\end{array}\right)
\end{align*}
for any $\left(\begin{array}{c}x_{0}\\x_{1}\\\cdot\\\cdot\\x_{k-1}\end{array}\right)\in\T^k$,  where we used \1 to show the equality between the 3rd term and the 4th term. 

Hence, $\overline{BC_P}=\overline{C_PB}$ holds, from which $BC_P=C_PB$ follows 
by \1 of Lemma 4. Thus, $B\in Saut_P$. 

Conversely, for $B\in Saut_P$, let a mapping $\varphi:\Omega_P\to\Omega_P$ 
satisfy (\ref{(11)}) for any 
$(x_n:n\in\Z)\in\Omega_P$ and $(y_n:n\in\Z)=\varphi(x_n:n\in\Z)\in\Omega_P$.  
Then, 
$(y_n,y_{n+1},\cdots,y_{n+k-1})$ is determined by $(x_n,x_{n+1},\cdots,x_{n+k-1})$. 
Also considering $B^{-1}$, 
$(x_n,x_{n+1},\cdots,x_{n+k-1})$ is determined by $(y_n,y_{n+1},\cdots,y_{n+k-1})$. 
On the other hand, the relation (\ref{(11)}) which holds independently of $n\in\Z$ 
implies $\varphi\circ\sigma=\sigma\circ\varphi$. Thus, $\varphi$ is a 
strong automorphism. 
\end{proof}

Using Theorem 6, we turn to investigate the structure of $Saut_P$.
\begin{thm}
Assume that $P(z)$ in $(\ref{(10)})$ is irreducible. 
Let the nonzero distinct roots of $P(z)=0$ be $\theta_1,\cdots,\theta_k$. 
Then we have the following:\\
\1 For a $k\times k$ matrix $B$, $B\in Saut_P$ if and only if
there exists a unit $\lambda_1$ in $\Q(\theta_1)$ with 
\begin{equation}\label{(13)}
\lambda_1\in\bigcap_{i=1}^k(\Z\theta_1^{i-1}+\Z\theta_1^{i-2}+\cdots+\Z\theta_1^{i-k})
\end{equation}
such that 
\begin{equation}\label{(12)}
B=\left(\begin{array}{ccc}\theta_1^k&\cdots&\theta_k^k\\
\cdots&\cdots&\cdots\\\theta_1^2&\cdots&\theta_k^2\\
\theta_1&\cdots&\theta_k\end{array}\right)
\left(\begin{array}{ccc}\lambda_1&\cdots&0\\&\ddots&\\
0&\cdots&\lambda_k\end{array}\right)
\left(\begin{array}{ccc}\theta_1^k&\cdots&\theta_k^k\\
\cdots&\cdots&\cdots\\\theta_1^2&\cdots&\theta_k^2\\
\theta_1&\cdots&\theta_k\end{array}\right)^{-1}, 
\end{equation}
where $\lambda_2,\cdots,\lambda_k$ are conjugates of $\lambda_1$ corresponding 
to $\theta_2,\cdots,\theta_k$. 
This correspondence between $B$ and $\lambda_1$ is an isomorphism 
as the multiplicative groups between $Saut_P$ and the set of units in 
$\Q(\theta_1)$ satisfying $(\ref{(13)})$.\\
\2 $Saut_P$ is isomorphic to a direct product of $k_1+k_2-1$ number of cyclic 
group of infinite orders with a finite cyclic group, where $k_1, 2k_2$ are the 
numbers of real roots and complex roots of $P(z)=0$, respectively. 
\end{thm}

\begin{proof}
\1 Assume $B\in Saut_P$. Since
$$
C_P\left(\begin{array}{c}\theta_i^k\\\vdots\\\theta_i^2\\\theta_i\end{array}\right)
=\left(\begin{array}{c}\theta_i^{k-1}\\\vdots\\\theta_i\\1\end{array}\right)
=\theta_i^{-1}\left(\begin{array}{c}\theta_i^k\\\vdots\\\theta_i^2\\\theta_i\end{array}\right)
~(i=1,2,\cdots,k)$$
and $BC_P=C_PB$, we have
$$
C_PB\left(\begin{array}{c}\theta_i^k\\\vdots\\\theta_i^2\\\theta_i\end{array}\right)
=BC_P\left(\begin{array}{c}\theta_i^k\\\vdots\\\theta_i^2\\\theta_i\end{array}\right)
=\theta_i^{-1}B\left(\begin{array}{c}\theta_i^k\\\vdots\\\theta_i^2\\\theta_i\end{array}\right)
~(i=1,2,\cdots,k).
$$
Since the eigenspace of $C_P$ corresponding to each eigenvalue 
$\theta_i~(i=1,\cdots,k)$ is 1-dimensional, there exists a unique 
$\lambda_i\in\C$ such that
$$
B\left(\begin{array}{c}\theta_i^k\\\vdots\\\theta_i^2\\\theta_i\end{array}\right)=
\lambda_i\left(\begin{array}{c}\theta_i^k\\\vdots\\\theta_i^2\\\theta_i\end{array}\right)
~(i=1,2,\cdots,k).
$$
Thus, we have (\ref{(12)}). 
Since $B=(b_{ij})_{i,j=1,\cdots,k}$ is an integer matrix and 
$$
b_{i1}\theta_1^k+b_{i2}\theta_1^{k-1}+\cdots+b_{ik}\theta_1=\lambda_1\theta^{k-i+1} 
$$
holds for any $i=1,\cdots,k$, we have 
$\lambda_1\in\Z\theta_1^{i-1}+\Z\theta_1^{i-2}+\cdots+\Z\theta_1^{i-k}$,  
and hence (\ref{(13)}). 
From (\ref{(12)}), we also have 
\begin{equation}\label{(13.5)}
\lambda_j=b_{i1}\theta^{i-1}+b_{i2}\theta_j^{i-2}+\cdots+b_{ik}\theta_j^{i-k}
~~(i,j=1,2,\cdots,k). 
\end{equation}
Thus, $\lambda_2,\cdots,\lambda_k$ are the conjugates of $\lambda_1$ 
corresponding to $\theta_2,\cdots,\theta_k$. 
Since $\det B=\pm1$ and (\ref{(13)}), we have 
$\lambda_1\lambda_2\cdots\lambda_k=\pm1$, which implies that the norm 
of $\lambda_1$ is $\pm1$, and hence, $\lambda_1$ is a unit satisfying (\ref{(13)}). 

Conversely, if $\lambda_1$ is a unit satisfying (\ref{(13)}). 
Define $B$ with the conjugates $\lambda_2,\cdots,\lambda_k$  of $\lambda_1$ 
by (\ref{(12)}). Then, $B$ is a integer matrix since (\ref{(13)}), (\ref{(13.5)}) and 
the representations as (\ref{(13.5)}) are unique. 
Moreover it commutes with $C_P$ and $\det B=\pm1$, we have $B\in Saut_P$. 
Thus, we have 
a bijection between $Saut_P$ and the set of units in $\Q(\theta_1)$ 
satisfying (\ref{(13)}). 
It is clear that this is an isomorphism with respect to the multiplications.  

\2 By the unit theorem of Dirichlet (Z. I. Borevich and I. R. Shafarevich \cite{BS}), 
for any algebraic integer $\alpha$ of order $k$, 
the set of units in $\Z[\alpha]$ is a direct product of $k_1+k_2-1$ number of cyclic 
groups of infinite order with a finite group. 

Let $\alpha=a_k\theta_1$ and $\lambda_1\in\Z[\alpha]$. 
Then, since $\alpha$ is an algebraic integer of order $k$, 
$$\lambda_1\in\Z+\Z\alpha+\cdots+\Z\alpha^{k-1}
\subset\Z+\Z a_k\theta_1+\cdots+\Z a_k^{k-1}\theta_1^{k-1}.$$
Since 
$$a_k^{k-1}\theta_1^{k-1}=-a_k^{k-2}a_{k-1}\theta_1^{k-2}-\cdots-a_k^{k-2}a_1
-a_k^{k-2}a_0\theta_1^{-1},$$
we have 
$$
\Z+\Z a_k\theta_1+\cdots+\Z a_k^{k-1}\theta_1^{k-1}
\subset\Z\theta_1^{-1}+\Z+\Z a_k\theta_1+\cdots+\Z a_k^{k-2}\theta_1^{k-2}.
$$
In the same way, we have 
\begin{align*}
&\Z\theta_1^{-1}+\Z a_k+\cdots+\Z a_k^{k-2}\theta_1^{k-2}
\subset\Z\theta_1^{-2}+\Z\theta_1^{-1}+\Z+\Z a_k+\cdots+\Z a_k^{k-3}\theta_1^{k-3}\\
&\subset\cdots\subset\Z\theta_1^{-k+1}+\Z\theta_1^{-k+2}+\cdots+\Z.
\end{align*}
Hence, the units $\lambda_1$ belonging to $\Z[\alpha]$ satisfy (15). 
 
Since the set of units satisfying (\ref{(13)}) is isomorphic to $Saut_P$ by 
the relation (16), it is a direct product of $l$ number of cyclic 
groups of infinite orders with a finite group, where $l\le k_1+k_2-1$. 
\end{proof}

\begin{ex}{\rm
Let $P(z)=2z^2-2z-3$. It has the roots $(1\pm\sqrt{7})/2$. 
Let $\theta=(1+\sqrt{7})/2$. 
The fundamental units in $\Q(\sqrt{7})$ are $8\pm3\sqrt{7}$. 
Let $\alpha=8+3\sqrt{7}$. Then, it satisfies (\ref{(13)}) since 
$$
\alpha=6\theta+5=11+9\theta^{-1}. 
$$
Let $B\in Saut_P$ be the matrix corresponding to $\lambda_1=\alpha$ in (\ref{(13)}). 
Then by (\ref{(13.5)}), we have  
$B=\begin{pmatrix}11&9\\6&5\end{pmatrix}$. 
Hence, $Saut_P/\{\pm I_2\}=\left\{\begin{pmatrix}11&9\\6&5\end{pmatrix}^n:n\in\Z\right\}$, where where $I_2$ is the unit matrix of degree 2. 
}\end{ex}

\begin{ex}{\rm 
Let $P(z)=2z^3-3z^2-3z+3$ which has 3 distinct real roots.  
Let $\theta$ be one of them. 
Let $\alpha=2\theta^2-\theta-5$. Then, it satisfies  
$\alpha^3+6\alpha^2+6\alpha-1=0$. and hence, it is a unit. 
Since 
$$\alpha=2\theta^2-\theta-5=2\theta-2-3\theta^{-1}=1-3\theta^{-2},$$
$\alpha$ satisfies (\ref{(13)}), and by (\ref{(13.5)}), $B$ in (\ref{(12)}) 
corresponding to $\lambda_1=\alpha$ is as follows.
$$
B=\left(\begin{array}{ccc}1&0&-3\\2&-2&-3\\2&-1&-5\end{array}\right)
\in Saut_P.
$$
Another unit independent of $\alpha$ is $\beta=4\theta^2-2\theta-7$  
satisfying $\beta^3-3\beta^2-2\beta+1=0$ and defines 
$$
C=\left(\begin{array}{ccc}5&0&-6\\4&-1&-6\\4&-2&-7\end{array}\right)
\in Saut_P.
$$
In fact, we have 
$$
Saut_P/\{\pm I_3\}=\{B^nC^m:n,m\in\Z\}. 
$$
}\end{ex}

\begin{ex}{\rm
Let $\theta$ be a root of $P(z)=3z^3+2z^2-3z+2$ with $k_1=1$ and $k_2=1$. 
Let $\alpha=\frac32\theta^2+\frac52\theta$. 
Then, $\alpha$ is a unit in $\Q(\theta)$ satisfying that 
$\alpha^3-2\alpha^2+4\alpha+1=0$, but it does not satisfy (\ref{(13)}). 
Nevertheless, $\alpha^3$ satisfies (\ref{(13)}) since
$$
\alpha^3=3\theta^2-\theta-9=-3\theta-6-2\theta^{-1}=-4-5\theta^{-1}+2\theta^{-2}. 
$$
By (\ref{(13.5)}), $B$ corresponding to $\lambda_1=\alpha^3$ is 
$$
B=\left(\begin{array}{ccc}-4&-5&2\\-3&-6&-2\\3&-1&-9\end{array}\right)
\in Saut_P, 
$$
while $B$ corresponding to $\lambda_1=\alpha$ is 
$$
C=\left(\begin{array}{ccc}1/2&1/2&-1\\3/2&3/2&-1\\3/2&5/2&0\end{array}\right)
$$
since
$$
\alpha=\frac32+\frac52\theta=\frac32\theta+\frac32-\theta^{-1}=\frac12+\frac12\theta-\theta^{-2}.
$$
Note that $C^3=B$ and $Saut_P/\{\pm I_3\}=\{B^n:n\in\Z\}$.
}\end{ex}

\begin{ex}{\rm
Let $\theta$ be a root of $P(z)=3z^4-2z^3+z^2+z-2$ with $k_1=2$ and 
$k_2=1$.
Then, $[1,3\theta,\theta+3\theta^2,2\theta+\theta^2+3\theta^3]$ is a base of the integer ring $\O(\theta)$ of $\Q(\theta)$. 
That is,  
$\O(\theta)=\Z+3\theta\Z+(\theta+3\theta^2)\Z+(2\theta+\theta^2+3\theta^3)\Z$. 
The unit group of $\O(\theta)$ is obtained as follows. 
$$
\O(\theta)^*=\{\pm \alpha^m\beta^n:m,n\in \Z\}
$$
with
$$
\alpha=3\theta^3-2\theta^2-2\theta+1,~
\beta=468\theta^3+93\theta^2+237\theta+361
$$
satisfying (\ref{(13)}). 
In this case, $B$ and $C$ corresponding to $\lambda_1=\alpha$ and  
$\lambda_1=\beta$ in (\ref{(13)}) respectively, are obtained as follows:
$$
B=\begin{pmatrix}
-2&3&1&-2\\
-3&0&2&0\\
0&-3&0&2\\
3&-2&-2&1\\
\end{pmatrix},~
C=\begin{pmatrix}
304&60&153&234\\
351&70&177&270\\
405&81&205&312\\
468&93&237&361\\
\end{pmatrix},
$$
Moreover, we have
$$
Saut_P/\{\pm I_3\}=\{B^nC^m:m,n\in \Z\}.
$$
}\end{ex}

\section{$Saut_P$ in the case $\deg P=2,3$}
\begin{thm} Assume $(\ref{(10)})$ with $k=2$. Let $D=a_1^2-4a_0a_2$ be the discriminant of 
$P$. \\
\1 Assume $D>0$ and let the roots of $P(z)=0$ be $\theta_1,\theta_2$. 
Then the following conditions for an integer matrix 
$B=\left(\begin{array}{cc}p&p'\\q&q'\end{array}\right)$ are equivalent:\\
{\rm(i-1)}~$B\in Saut_P$,\\
{\rm(i-2)}~$B$ satisfies $(\ref{(12)})$ with some $\lambda_i\in\R~(i=1,2)$ such that 
$\lambda_1\lambda_2=\pm1$, \\
{\rm(i-3)}~
$B=\left(\begin{array}{cc}c_0&a_0c_1\\-a_2c_1&c_0-a_1c_1\end{array}\right)$
for some $c_0,c_1\in\Z$ such that
\begin{equation}\label{(14)}
(2c_0-a_1c_1)^2-Dc_1^2=\pm4,
\end{equation}
{\rm(i-4)}~$B$ satisfies that $pq'-p'q=\pm1$ and  
\begin{equation}
\frac{p\theta_i+p'}{q\theta_i+q'}=\theta_i~~(i=1,2).
\end{equation}
Moreover, $Saut_P/\{\pm I_2\}$
is an infinite cyclic group if and only if $D$ is a non-square integer. 
In addition, if $D\in\{1,4\}$, then it is a cyclic group of order $2$.
Otherwise, it is the trivial group. \\
\2 If $D=0$, then $Saut_P/\{\pm I_2\}$ is an infinite cyclic group. \\
\3 Assume $D<0$. Then $Saut_P/\{\pm I_2\}$ is the trivial group except for the case 
$D\in\{-3,-4\}$. If $D=-3$, then $Saut_P/\{\pm I_2\}$ is a cyclic group of order $3$.
If $D=-4$, then $Saut_P/\{\pm I_2\}$ is a cyclic group of order $2$.
\end{thm}

\begin{rem}{\rm
The equation (\ref{(14)}) is known as the Pell's equation, i.e., Diophantine equation of the form
\begin{equation}
w^2-Dv^2=\pm4, \ w,v\in Z,~\mbox{for a given integer}\,D.
\end{equation}
The main tool to solve it is the theory of continued fraction 
(see H. W.Jr. Lenstra \cite{L}, T. Takagi \cite{T}, for example).
}\end{rem}

\begin{proof}
\1 Assume $D>0$. 

By Theorem 7, we already know that \rm(i-1) and \rm(i-2) are equivalent. 

Assume \rm(i-2). 
Then we have 
\begin{equation}\label{(17)}
\left(\begin{array}{cc}p&p'\\q&q'\end{array}\right)=
\left(\begin{array}{cc}\theta_1^2&\theta_2^2\\\theta_1&\theta_2\end{array}\right)
\left(\begin{array}{cc}\lambda_1&0\\0&\lambda_2\end{array}\right)
\left(\begin{array}{cc}\theta_1^2&\theta_2^2\\\theta_1&\theta_2\end{array}\right)^{-1}
\end{equation}
with some $\lambda_i~(i=1,2)$ such that $\lambda_1\lambda_2=\pm1$. 
It follows from (\ref{(17)}) that
$\lambda_i=c_0+c'_1\theta_i^{-1}$ with $c_0=p,~c'_1=p'$. 
Combining with $\theta_1\theta_2=\frac{a_0}{a_2},~\theta_1+\theta_2=-\frac{a_1}{a_2}$, we thus have
$$
\left(\begin{array}{cc}p&p'\\q&q'\end{array}\right)
=\left(\begin{array}{cc}c_0&c'_1\\-\frac{a_2}{a_0}c'_1&c_0-\frac{a_1}{a_0}c'_1\end{array}\right).
$$
As $B$ is a integer matrix, $\frac{a_2}{a_0}c'_1$ and $\frac{a_1}{a_0}c'_1$ 
are integers.  Since $a_2z^2+a_1z+a_0$ is a primitive polynomial, this is possible only if 
$c'_1$ is a multiple of $a_0$. Putting $c_1=\frac{c'_1}{a_0}\in\Z$, we have $c_1'=a_0c_1$ and 
$$
B=\left(\begin{array}{cc}p&p'\\q&q'\end{array}\right)
=\left(\begin{array}{cc}c_0&a_0c_1\\-a_2c_1&c_0-a_1c_1\end{array}\right).
$$
Clearly, $pq'-p'q=\pm1$ is equivalent to (\ref{(14)}). Thus, {\rm(i-2)} implies {\rm(i-3)}. 

Conversely, if $B=\left(\begin{array}{cc}c_0&a_0c_1\\-a_2c_1&c_0-a_1c_1\end{array}\right)$, 
then $BC_P=C_PB$ follows. Thus,  {\rm(i-3)} implies {\rm(i-2)}.

Let $B\in Saut_p$. Then by (\ref{(12)}), we have 
$$
\left(\begin{array}{cc}p&p'\\q&q'\end{array}\right)
\left(\begin{array}{cc}\theta_1^2&\theta_2^2\\\theta_1&\theta_2\end{array}\right)=
\left(\begin{array}{cc}\theta_1^2&\theta_2^2\\\theta_1&\theta_2\end{array}\right)
\left(\begin{array}{cc}\lambda_1&0\\0&\lambda_2\end{array}\right).
$$
for some $\lambda_i~(i=1,2)$ with $\lambda_1\lambda_2=\pm1$. It follows that   
$$
\frac{p\theta_i+p'}{q\theta_i+q'}=\theta_i\mbox{ and }pq'-p'q=\pm1~~(i=1,2).
$$
Thus, {\rm(i-1)} implies {\rm(i-4)}. 

Assume {\rm(i-4)}. Then, there exists $\lambda_1,\lambda_2$ such that 
$$
\left(\begin{array}{cc}p&p'\\q&q'\end{array}\right)
\left(\begin{array}{c}\theta_i^2\\\theta_i\end{array}\right)
=\lambda_i\left(\begin{array}{c}\theta_i^2\\\theta_i\end{array}\right)~~(i=1,2).
$$
Hence, we have (\ref{(12)}) and $\lambda_1\lambda_2=pq'-p'q=\pm1$. 
Thus, {\rm(i-4)} implies {\rm(i-2)} and the first half of statement \1 is proved. 

Let $D>0$. Consider the equation (\ref{(14)}), it always has trivial solutions $c_0=\pm1,~c_1=0$, 
that correspond to $B=\pm I_2$. If $D$ is a square integer, then it is easy to see that 
(\ref{(14)}) has a nontrivial solution only if $D=4$, since the difference of 2 square integers is  
4 only if they are 0 and 4. 
Assume that $Saut_P/\{\pm I_2\}$ has an element, say $B$ of order $2$. 
This implies that 
\begin{align*}
B&=\left(\begin{array}{cc}\theta_1^2&\theta_2^2\\\theta_1&\theta_2\end{array}\right)
\left(\begin{array}{cc}1&0\\0&-1\end{array}\right)
\left(\begin{array}{cc}\theta_1^2&\theta_2^2\\\theta_1&\theta_2\end{array}\right)^{-1}\\
&=\frac{1}{\theta_1-\theta_2}
\left(\begin{array}{cc}\theta_1+\theta_2&-2\theta_1\theta_2\\2&-(\theta_1+\theta_2)\end{array}\right)
=\frac{1}{\theta_1-\theta_2}
\left(\begin{array}{cc}-\frac{a_1}{a_2}&-\frac{2a_0}{a_2}\\2&\frac{a_1}{a_2}\end{array}\right)
\end{align*}
is an integer matrix. Therefore $\theta_1-\theta_2$ is rational, and hence, 
$D$ is a square integer. 

Thus, if $D$ is a non-square integer, then $Saut_P/\{\pm I_2\}$ except for the unit contains 
only elements of infinite order, and later we prove that it does contain elements of infinite order. 
Let $\eta$ be the mapping defined on $Saut_P$
such that for $B\in Saut_P$, $\eta(B)=(\lambda_1,\lambda_2)$, where $(\lambda_1,\lambda_2)$ 
satisfies (\ref{(17)}). Then $\eta(B)\in(\R\setminus\{0\})^2$ and 
$Image(\eta)$ is a discrete multiplicative subgroup of $(\R\setminus\{0\})^2$.  
Let $\tilde{\lambda}$ be the smallest element in 
$\{\lambda_1\in(1,\infty):(\lambda_1,\lambda_2)\in Image(\eta)\}$. 
Since $(1,-1)\notin \eta(B)$, just one of $(\tilde{\lambda},\tilde{\lambda}^{-1})$ 
or $(\tilde{\lambda},-\tilde{\lambda}^{-1})$ is in $Image(\eta)$. This element 
generates $\tilde{\lambda}$. 

Let $D>0$ be a non-square integer. We prove that $Saut_P/\{\pm I_2\}$ contains 
nontrivial elements using the continued fraction expansion of $\theta_1$, which is eventually periodic. 
Let it be 
$$
\theta_1=[c_0;c_1,\cdots,c_{k-1},(e_0,e_1\cdots,e_{m-1})^\infty].
$$
Then, 
\begin{align*}
&\frac{C}{G}=[c_0;c_1,\cdots,c_{k-1}],~\frac{C'}{G'}=[c_0;c_1,\cdots,c_{k-2}]\\
&\frac{E}{F}=[e_0;e_1,\cdots,e_{m-1}],~\frac{E'}{F'}=[e_0;e_1,\cdots,e_{m-2}],
\end{align*}
are irreducible fractions with nonnegative denominators, 
where if $k=0$, that is, the continued fraction expansion is purely periodic, then we put 
$$\left(\begin{array}{cc}C&C'\\G&G'\end{array}\right)=
\left(\begin{array}{cc}1&0\\0&1\end{array}\right)$$
and if $k=1$ or $m=1$, we put
$$
\left(\begin{array}{cc}C&C'\\G&G'\end{array}\right)=
\left(\begin{array}{cc}c_0&1\\1&0\end{array}\right)~\mbox{and}~
\left(\begin{array}{cc}E&E'\\F&F'\end{array}\right)=
\left(\begin{array}{cc}e_0&1\\1&0\end{array}\right). 
$$
Finally, define $2\times 2$-matrix 
$\left(\begin{array}{cc}p_n&p'_n\\q_n&q'_n\end{array}\right)~(n\in\Z)$ by
$$
\left(\begin{array}{cc}p_n&p'_n\\q_n&q'_n\end{array}\right)=
\left(\begin{array}{cc}C&C'\\G&G'\end{array}\right)
\left(\begin{array}{cc}E&E'\\F&F'\end{array}\right)^n
\left(\begin{array}{cc}C&C'\\G&G'\end{array}\right)^{-1}.
$$
Then, they are different each other and the totality of elements in $Saut_P/\{\pm I_2\}$.  
Hence, $Saut_P/\{\pm I_2\}$ is an infinite cyclic group. 

Let $D>0$ be a square. By (i-3), for an integer matrix 
$B=\left(\begin{array}{cc}c_0&a_0c_1\\-a_2c_1&c_0-a_1c_1\end{array}\right)$, 
$B\in Saut_P$ if and only if for some $c_0,c_1\in\Z$ such that (\ref{(14)}) holds,  
That is, 
$$(2c_0-a_1c_1)^2-Dc_1^2=\pm4.$$
It always has solution $c_0=\pm 1,~c_1=0$
corresponding to $B=\pm I$. The other solutions come from the case $c_1\ne0$. 
Since the difference between 2 square numbers becomes 4 only if one is 0 
and the other is 4, the solution of  (\ref{(14)}) with $c_1\ne0$ 
is limited to the following 2 cases:
$$
\left\{\begin{array}{c}D=1\\c_1=\pm2\\a_1=\pm c_0\end{array}\right.
(\pm~\mbox{correspondingly}),~\mbox{and}~
\left\{\begin{array}{c}D=4\\c_1=\pm1\\a_1=\pm 2c_0\end{array}\right.
(\pm~\mbox{correspondingly})
$$

In the first case, we have 
$B=\pm\left(\begin{array}{cc}a_1&2a_0\\-2a_2&-a_1\end{array}\right)$ which 
satisfies that $B^2=I_2$ in virtue of $a_1^2-4a_0a_2=1$. 
Thus, $Saut_P/\{\pm I_2\}$ is a cyclic group of order 2.  

In the second case, note that $a_1$ is even and we have 
$B=\pm\left(\begin{array}{cc}\frac{a_1}{2}&a_0\\-a_2&-\frac{a_1}{2}\end{array}\right)$ which 
satisfies that $B^2=I_2$ in virtue of $a_1^2-4a_0a_2=4$. Thus, 
Thus, $Saut_P/\{\pm I_2\}$ is a cyclic group of order 2.  

\2
Let $D=0$. Then by (10), $B\in Saut_P$ if and only if 
$$B=\left(\begin{array}{cc}p&p'\\-a_2p'&p-a_1p'\end{array}\right)\mbox{ with }
\det B=\left(p-\frac{a_1p'}{2}\right)^2=1.$$ 
Since $D=a_1^2-4a_2=0$, $a_1$ is even. Put $a_1=2c$. 
Then, $p-cp'=\pm1$ and
$$B=\left(\begin{array}{cc}\pm1+cp'&p'\\-c^2p'&\pm1-cp'\end{array}\right)\in Saut_P~(\forall p'\in\Z) 
~(\mbox{$\pm$ correspondingly}).$$
Thus, we have Case 3 and
$Saut_P/\{\pm I_2\}$ is an infinite cyclic group. 

\3
Note that $D\equiv 1~\mbox{or}~0~(\rm{mod}~4)$. Hence, if $D<0$, then $D\le-3$. 
Let $B\in Saut_P$. By (i-3), 
$B=\left(\begin{array}{cc}c_0&a_0c_1\\-a_2c_1&c_0-a_1c_1\end{array}\right)$
for some $c_0,c_1\in\Z$ with (\ref{(14)}). 
Therefore, $(2c_0-a_1c_1)^2-Dc_1^2=4$. 
This always has a trivial solution that $c_1=0,~c_0=\pm1$, 
corresponding to $B=\pm I_2$. We look for the other solutions. 

There are 2 cases where the nontrivial solutions exist. \vspace{0.5em}\\
(Case 1) $D=-3$ and  
$\left\{\begin{array}{c}2c_0-a_1c_1=\pm1\\c_1=\pm1\end{array}\right. 
~\mbox{($\pm$ independently)}$. 

In this case, $a_1$ is an odd number.\\
If $c_1=1$, then $B\in Saut_P$ is one of the following:
$$
B_1=\left(\begin{array}{cc}\frac{a_1+1}{2}&a_0\\-a_2&\frac{-a_1+1}{2}\end{array}\right)
~\mbox{or}~
B_2=\left(\begin{array}{cc}\frac{a_1-1}{2}&a_0\\-a_2&\frac{-a_1-1}{2}\end{array}\right). 
$$
If $c_1=-1$, then $B\in Saut_P$ is one of the following:
$$
B_3=\left(\begin{array}{cc}\frac{-a_1-1}{2}&-a_0\\a_2&\frac{a_1+1}{2}\end{array}\right)
~\mbox{or}~
B_4=\left(\begin{array}{cc}\frac{-a_1+1}{2}&-a_0\\a_2&\frac{a_1-1}{2}\end{array}\right). 
$$
It is easy to check using $a_1^2-4a_0a_2=-3$ that 
$$B_1^2=B_2,~B_1^3=-I_2~\mbox{and}~B_3=-B_1,~B_4=-B_2.$$
This implies that $Saut_P/\{\pm I_2\}$ is cyclic group of order 3. \vspace{0.5em}\\
(Case 2) $D=-4$. In this case, just same as the case $D=4$, 
we have $Saut_P/\pm1$ is a cyclic group of order 2. 
\end{proof}

We now give several examples to illustrate Theorem 8.

\begin{ex}{\rm
Let $P(z)=5z^2-2$ and $\theta=\frac{\sqrt{10}}{5}$ be one of the roots of $P(z)=0$. 
It has the continued fraction expansion 
$$
\theta=[0;1,\underbrace{1,1,2},\underbrace{1,1,2},\underbrace{1,1,2},\cdots]. 
$$
Then let 
\begin{align*}
&\left(\begin{array}{cc}C&C'\\G&G'\end{array}\right)
=\left(\begin{array}{cc}1&0\\1&1\end{array}\right)\\
&\frac{E}{F}:=[1;1,2]=1+\frac{1}{1+\frac12}=\frac{5}{3}~,~~\frac{E'}{F'}:=[1;1]=1+\frac{1}{1}=\frac{2}{1}. 
\end{align*}
Then, we have 
$$
Saut_P/\{\pm I_2\}=\left\{\left(\begin{array}{cc}1&0\\1&1\end{array}\right)
\left(\begin{array}{cc}5&2\\3&1\end{array}\right)^n
\left(\begin{array}{cc}1&0\\1&1\end{array}\right)^{-1}:n\in\Z\right\}.
$$
If $n=1$ in the above, we have 
$$
\left(\begin{array}{cc}1&0\\1&1\end{array}\right)
\left(\begin{array}{cc}5&2\\3&1\end{array}\right)^n
\left(\begin{array}{cc}1&0\\1&1\end{array}\right)^{-1}
=\left(\begin{array}{cc}3&2\\5&3\end{array}\right).
$$
Applying it to  
$$(\cdots;\frac37,\frac57,\frac47,\frac27,\frac37,\frac57,\cdots)\in\Omega_P,$$
we get 
$$(\cdots;\frac57,\frac27,\frac27,\frac57,\frac57,\frac27,\cdots)\in\Omega_P.$$
}\end{ex}

\begin{ex}{\rm
Let $P(z)=z^2-2z+1$ with $D=0$.
Then by Theorem 8, $Saut_P=\{\pm B^n:n\in\Z\}$ with
$B=\left(\begin{array}{cc}0&1\\-1&2\end{array}\right)$. 
Applying it to $(x_n:n\in\Z)\in\Omega_P$, we get 
$(y_n:n\in\Z)\in\Omega_P$ with 
$$
\left(\begin{array}{c}y_n\\y_{n+1}\end{array}\right)=
\left(\begin{array}{cc}0&1\\-1&2\end{array}\right)
\left(\begin{array}{c}x_n\\x_{n+1}\end{array}\right)=
\left(\begin{array}{c}x_{n+1}\\-x_n+2x_{n+1}\end{array}\right).
$$
Since $y_{n+1}=-x_n+2x_{n+1},~y_{n+2}=-y_n+2y_{n+1}=-2x_n+3x_{n+1}$, we have
$$
\left(\begin{array}{c}y_n\\y_{n+1}\end{array}\right)=
\left(\begin{array}{cc}0&1\\-1&2\end{array}\right)
\left(\begin{array}{c}x_{n+1}\\-x_n+2x_{n+1}\end{array}\right)=
\left(\begin{array}{c}-x_n+2x_{n+1}\\-2x_n+3x_{n+1}\end{array}\right)
$$
as expected.
}\end{ex}

\begin{ex}{\rm
Let $P(z)=z^2+1\in\Z[z]$ with $D=-4$.   
Then it is easy to show that 
$Saut_P/\{\pm I_2\}=\left\{I_2,\left(\begin{array}{cc}0&1\\-1&0\end{array}\right)\right\}$. 
Since $\left(\begin{array}{cc}0&1\\-1&0\end{array}\right)^2=I_2$,  
$Saut_P/\{\pm I_2\}$ is a cyclic group of order 2.
}\end{ex}

We now discuss the case of degree 3. It's much harder than the case of degree 2 and we have a result only for a special case.

\begin{thm}
\1 Assume that the roots $\theta_1,\theta_2,\theta_3$ of 
$$P(z)=a'_3z^3+a'_2z^{2}+a'_1z+a'_0\in\Z[z],~a_0,a_3\ne 0$$
are simple. 
We put $a_3=\frac{a'_3}{a'_0},~a_2=\frac{a'_2}{a'_0},~a_1=\frac{a'_1}{a'_0}$. 
Then $B\in Saut_P$ if and only if

$$B=\left(\begin{array}{ccc}c_0&c_1&c_2\\-a_3c_2&c_0-a_2c_2&c_1-a_1c_2\\-a_3c_1+a_1a_3c_2&-a_2c_1+(a_1a_2-a_3)c_2&c_0-a_1c_1+(a_1^2-a_2)c_2\end{array}\right)$$
for some $c_0,c_1,c_2\in\Z$ such that
\begin{equation*}
\begin{split}
 ~~~\det B&=c_0^3-a_3c_1^3+a_3^2c_2^3-a_1c_0^2c_1+(a_1^2-4a_2)c_0^2c_2+a_2c_0c_1^2\\&+(a_2^2-2a_1a_3)c_0c_2^2+a_1a_3c_1^2c_2-a_2a_3c_1c_2^2+(3a_3-a_1a_2)c_0c_1c_2\\&=\pm1. \end{split}
\end{equation*}
\2 Let $P(z)=-az^3+1\in\Z[z]$. Then, $B\in Saut_P$ if and only if
$B=\left(\begin{array}{ccc}c_0&c_1&c_2\\ac_2&c_0&c_1\\ac_1&ac_2&c_0\end{array}\right)$
for some $c_0,c_1,c_2\in\Z$ such that
\begin{equation}
\det B=c_0^3+ac_1^3+a^2c_2^3-3ac_0c_1c_2=\pm1.
\end{equation}
\3
Let $a=1$ in \2. Then, $Saut_P/\{\pm I_3\}$ is a cyclic group of order $3$ spanned by
$\left(\begin{array}{ccc}0&1&0\\0&0&1\\1&0&0\end{array}\right)$.
\end{thm}

\begin{proof}
\1 
By hypothesis, $\theta_i^{-1}(i=1,2,3)$ is the roots of the reciprocal polynomial of $p(z)$, say $h(z)$, i.e.,
$h(z)=z^3+a_1z^2+a_2z+a_3$. Put
$\tau_1:=\frac{1}{\theta_1}+\frac{1}{\theta_2}+\frac{1}{\theta_3}$,
$\tau_2:=\frac{1}{\theta_1\theta_2}+\frac{1}{\theta_1\theta_3}+\frac{1}{\theta_2\theta_3}$,
$\tau_3:=\frac{1}{\theta_1\theta_2\theta_3}$.

Applying theorem 5 with $k=3$, 
\begin{equation*}
B=\left(\begin{array}{ccc}\theta_1^3&\theta_2^3&\theta_3^3\\\theta_1^2&\theta_2^2&\theta_3^2\\
\theta_1&\theta_2&\theta_3\end{array}\right)
\left(\begin{array}{ccc}\lambda_1&0&0\\0&\lambda_2&0\\0&0&\lambda_3\end{array}\right)
\left(\begin{array}{ccc}\theta_1^3&\theta_2^3&\theta_3^3\\\theta_1^2&\theta_2^2&\theta_3^2\\
\theta_1&\theta_2&\theta_3\end{array}\right)^{-1}
\end{equation*}
where $\lambda_i=c_0+c_1\theta_i^{-1}+c_{2}\theta_i^{-2}~(i=1,2,3).$
A careful calculation from above gives
\begin{equation*}
B=
\left(\begin{array}{ccc}c_1&c_2&c_3\\c_2\tau_3&c_0-c_2\tau_2&c_1+c_2\tau_1\\
c_1\tau_3+c_2\tau_1\tau_3&-c_1\tau_2-c_2\tau_1\tau_2+c_2\tau_3&c_0+c_1\tau_1+c_2(\tau_1^2-\tau_2)\end{array}\right)
\end{equation*}
 From Vieta's formulas, we have 
$\tau_1=-a_1,\tau_2=a_2,\tau_3=-a_3.$
The second statement follows just by calculating $\det B$, so \1 follows.

\2
From $BC_P=C_PB$, the expression of B in \1 follows. Or applying \1 with $a_1=a_2=0, a_3=-a.$ 

\3
By \2, $B\in Saut_P$ if and only if
\begin{align*}
&\det B=c_0^3+c_1^3+c_2^2-3c_0c_1c_2\\
&=(c_0+c_1+c_2)(c_0^2+c_1^2+c_2^2-c_0c_1-c_1c_2-c_2c_1)\\
&=\frac12(c_0+c_1+c_2)((c_0-c_1)^2+(c_1-c_2)^2+(c_2-c_0)^2)
\end{align*}
Therefore, by the second line above $\lambda_1\lambda_2\lambda_3=\pm1$ holds
only if $c_0+c_1+c_2=\pm1$. Hence, by the third line above
$(c_0-c_1)^2+(c_1-c_2)^2+(c_2-c_0)^2=2$.
This is possible only if two of the terms in the left hand side is 1 and the other is 0.
Without loss of generality, assume that $c_0=c_1$ and $c_2=c_0\pm1$.
Then, we have $c_0+c_1+c_2=3c_0\pm1$. Since $c_0+c_1+c_2=\pm1$, we must have
$c_0=0$, and hence, $(c_0,c_1,c_2)=(0,0,\pm1)$. Thus, $\lambda_1\lambda_2\lambda_3=\pm1$
is equivalent to $(c_0,c_1,c_2)\in\{(0,0,\pm1),(0,\pm1,0),(\pm1,0,0)\}$, and hence,
$(\lambda_1,\lambda_2,\lambda_3)\in\{(\pm1,\pm1,\pm1),(\pm\omega,\pm\omega,\pm\omega),
(\pm\omega^2,\pm\omega^2,\pm\omega^2)\}$.
Thus, $Saut_P/\{\pm I_2\}$ is a cyclic group of order 3.
\end{proof}

\begin{ex}{\rm
Let $P(z)=-z^3+1\in\Z[z]$ and $\omega=\frac{-1+\sqrt{3}}{2}$.
Then, by Theorem 7, $B=\left(\begin{array}{ccc}0&1&0\\0&0&1\\1&0&0\end{array}\right)$.
Since we can easily verify that
$$B^2=\left(\begin{array}{ccc}0&0&1\\1&0&0\\0&1&0\end{array}\right)\neq I_3,~B^3=I_3 .$$
Therefore, $Saut_P/\{\pm I_3\}$ is a cyclic group of order $3$ and it is spanned by
$B=\left(\begin{array}{ccc}0&1&0\\0&0&1\\1&0&0\end{array}\right)$.
Let $X=(x_n:n\in\Z)\in\Omega_P$ and $\varphi_B(X)=(y_n:n\in\Z)$.
Then,
$$
\left(\begin{array}{c}y_0\\y_1\\y_2\end{array}\right)=
\left(\begin{array}{ccc}0&1&0\\0&0&1\\1&0&0\end{array}\right)
\left(\begin{array}{c}x_0\\x_1\\x_2\end{array}\right)=
\left(\begin{array}{c}x_1\\x_2\\x_0\end{array}\right).
$$
Since $x_3=x_0,~y_3=y_0$, we have
$$
\left(\begin{array}{c}y_1\\y_2\\y_3\end{array}\right)=
\left(\begin{array}{ccc}0&1&0\\0&0&1\\1&0&0\end{array}\right)
\left(\begin{array}{c}x_1\\x_2\\x_3\end{array}\right)=
\left(\begin{array}{c}x_2\\x_3\\x_1\end{array}\right)=
\left(\begin{array}{c}x_2\\x_0\\x_1\end{array}\right)
$$
as expected.
}\end{ex}

\section{Streams over Galois fields}

In the last section, 
we investigate the streams over the Galois field $GF(q^k)=GF(q)(\theta)$, where $\theta$ is a root of an irreducible polynomial $P(z)\in GF(q)[z]$ of degree $k$. The zeros $\Omega_P^q$ of $ST(GF(q^k))$ by the action of $P(Z)$ is discussed. Note that any element in $\Omega_P^q$ is cyclic with period $q^k-1$. 

 \begin{thm}
Let $P(z)\in GF(q)[z]$ be an irreducible polynomial of degree $k\ge1$. 
Let $\Omega_P^q$ be the zeros of the $P(z)$-action to $St(GF(q))$. Then, 
$Saut_P$ is a cyclic group of order $q^k-1$. 
\end{thm}
\begin{proof}
Let $$P(z)=a_0+a_1z+\cdots+a_kz^k~\mbox{with}~a_i\in GF(q)~(i=0,1,\cdots,k), a_0,a_k\ne0.$$
Then, any element in $\Omega_P^q$ is cyclic  
with period $q^k-1$, since for any $(x_n\in GF(q):n\in\Z)\in\Omega_P^q$, 
$x_n$ is determined by $(x_{n-k},x_{n-k+1},\cdots,x_{n-1})$. 

Let $\theta_1,\theta_2,\cdots,\theta_k$ be the roots of $P(z)=0$. 
Since $P(z)$ is irreducible, they differ each other. 
Let $\eta$ be a primitive root of $GF(q^k)=GF(q)(\theta_1,\cdots,\theta_k)$. 
That is, $\eta^i=1$ if and only if $i\equiv 0~({\rm mod}~q^k-1)$. 
Then, $\eta$ can be written as $\eta=c_1\theta_1+\cdots+c_k\theta_1^k$ with 
$c_1,\cdots,c_k\in GF(q)$. Let 
$$\lambda_i=c_1\theta_i+\cdots+c_k\theta_i^k~(i=1,\cdots,k).$$
Let the matrix $B$ be defined by (\ref{(12)}). Then, it is a matrix with entries in $GF(q)$ 
since the entries are symmetric functions of $\theta_1, \cdots,\theta_k$. 
Also, it commutes with $C_P$, and hence, $B\in Saut_P$. 
Moreover, since $\lambda_1=\eta$ has the multiplicative order $q^k-1$, we have 
$B^i=I_k$ if and only if $i\equiv 0~(mod~q^k-1)$. This implies that $Saut_P$ is a 
cyclic group of order $q^k-1$ 
since the cardinality of the set of nonzero $B$ as (\ref{(12)}) is at most $q^k-1$. 
\end{proof}

\begin{ex}{\rm
Let us consider $P(z)=z^2+1\in GF(3)[z]$. 
Let $\theta\in GF(9)$ be a root of $z^2+1=0$. Then, $\theta+1$ is 
a primitive roots of $GF(9)$. The following $B$ generates $Saut_P$.  
$$
B=\left(\begin{array}{cc}2&2\\\theta&-\theta\end{array}\right)
\left(\begin{array}{cc}\theta+1&0\\0&-\theta+1\end{array}\right)
\left(\begin{array}{cc}2&2\\\theta&-\theta\end{array}\right)^{-1}
=\left(\begin{array}{cc}2&1\\2&2\end{array}\right). 
$$

\vspace{6em}
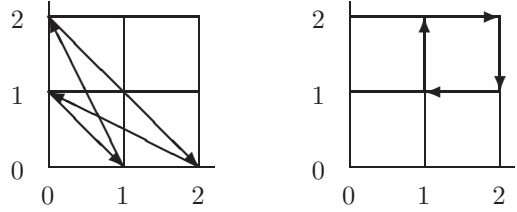
\begin{figure}[h]
\setlength{\unitlength}{0.2mm}
\begin{picture}(0,0)(-150,0)
\multiput(0,0)(200,0){2}{
\put(0,0){\line(1,0){110}}
\put(0,0){\line(0,1){110}}
\put(0,50){\line(1,0){100}}
\put(0,100){\line(1,0){100}}
\put(50,0){\line(0,1){100}}
\put(100,0){\line(0,1){100}}

\put(-5,-25){0}
\put(45,-25){1}
\put(95,-25){2}

\put(-25,-8){0}
\put(-25,42){1}
\put(-25,92){2}}

\put(50,0){\thicklines\vector(-1,2){50}}
\put(0,100){\thicklines\vector(1,-1){100}}
\put(100,0){\thicklines\vector(-2,1){100}}
\put(0,50){\thicklines\vector(1,-1){50}}

\put(200,0){
\put(50,50){\thicklines\vector(0,1){50}}
\put(50,100){\thicklines\vector(1,0){50}}
\put(100,100){\thicklines\vector(0,-1){50}}
\put(100,50){\thicklines\vector(-1,0){50}}}
\end{picture}
\vspace{1em}
\caption{$\sigma$-orbits of $\Omega_P^3$}
\end{figure} 

\vspace{1em}

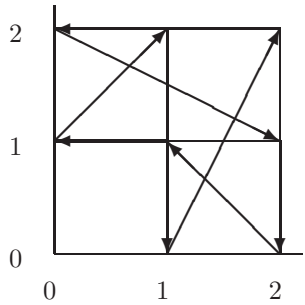
\begin{figure}[h]
\setlength{\unitlength}{0.3mm}
\begin{picture}(0,0)(-150,100)
\put(0,0){\line(1,0){110}}
\put(0,0){\line(0,1){110}}
\put(0,50){\line(1,0){100}}
\put(0,100){\line(1,0){100}}
\put(50,0){\line(0,1){100}}
\put(100,0){\line(0,1){100}}

\put(-5,-20){0}
\put(45,-20){1}
\put(95,-20){2}

\put(-20,-6){0}
\put(-20,44){1}
\put(-20,94){2}

\put(50,0){\thicklines\vector(1,2){50}}
\put(100,100){\thicklines\vector(-1,0){100}}
\put(0,100){\thicklines\vector(2,-1){100}}
\put(100,50){\thicklines\vector(0,-1){50}}
\put(100,0){\thicklines\vector(-1,1){50}}
\put(50,50){\thicklines\vector(-1,0){50}}
\put(0,50){\thicklines\vector(1,1){50}}
\put(50,100){\thicklines\vector(0,-1){100}}

\end{picture}
\vspace{10em}
\caption{$B$-orbit of $\Omega_P^3$}
\end{figure} 

Then $\sigma$-orbits of $\Omega_P^3$ and the $B$-orbit of $\Omega_P^3$ are 
as above. 
}\end{ex}

\noindent{\bf Acknowledgment:}
The authors thank Prof. Hajime Kaneko from Tsukuba University in Japan 
for his useful suggestions. 
Actually, Theorem 5 is essentially due to him. The second author was supported by TianYuan Visiting Scholar Program of NSFC(Grant No:12426661). 
He would like to express his gratitude to Institute of Advanced Mathematics 
of Osaka Metropolitan University and South China University of Technology 
for providing supportive and stimulating academic enviroments for this work.

\end{document}